\documentclass[10pt]{amsart}
\usepackage[T1]{fontenc}
\usepackage{lmodern}
\usepackage{amsmath,amssymb,amsthm,mathtools}
\usepackage{microtype}
\usepackage{needspace}
\usepackage{enumitem}
\usepackage[hidelinks]{hyperref}
\hypersetup{pdftitle={Nonexistence of maximal curves of genus five over F64},
  pdfauthor={Gilberto B. Almeida Filho, Saeed Tafazolian, Stéfani C. Vieira}}

\newtheorem{theorem}{Theorem}[section]
\newtheorem{proposition}[theorem]{Proposition}
\newtheorem{lemma}[theorem]{Lemma}
\newtheorem{corollary}[theorem]{Corollary}
\theoremstyle{remark}
\newtheorem{remark}[theorem]{Remark}
\numberwithin{equation}{section}

\newcommand{\F}{\mathbb F}
\newcommand{\PP}{\mathbb P}
\newcommand{\OO}{\mathcal O}
\newcommand{\Ca}{\mathcal C}
\newcommand{\im}{\operatorname{im}}
\newcommand{\rk}{\operatorname{rank}}
\newcommand{\divi}{\operatorname{div}}
\newcommand{\adj}{\operatorname{adj}}
\newcommand{\diag}{\operatorname{diag}}
\newcommand{\GL}{\operatorname{GL}}
\newcommand{\one}{\mathbf 1}
\newcommand{\xx}{\mathbf X}

\title[Maximal curves of genus five over $\F_{64}$]
{Nonexistence of maximal curves of genus five over $\F_{64}$}
\author{Gilberto B. Almeida Filho}
\address{Departamento de Ciências Exatas e da Terra, Universidade Federal de Mato Grosso, Várzea Grande, Mato Grosso, Brazil}
\email{gilberto.filho@ufmt.br}
\thanks{All three authors contributed equally to this work.}

\author{Saeed Tafazolian}
\address{IMECC, Universidade Estadual de Campinas, Campinas, São Paulo, Brazil}
\email{saeed@unicamp.br}

\author{Stéfani C. Vieira}
\address{Departamento de Ciências Exatas e da Terra, Universidade Federal de Mato Grosso, Várzea Grande, Mato Grosso, Brazil}
\email{stefani.vieira1@ufmt.br}
\date{}
\subjclass[2020]{Primary 11G20; Secondary 14G15, 14H40, 14H50}
\keywords{Maximal curve, Cartier operator, theta characteristic, Hasse--Witt matrix, cubic resolvent}

\begin{document}
\begin{abstract}
We show that there is no maximal curve of genus five over $\F_{64}$.
As a consequence, $N_{64}(5)=140$, and the genus spectrum of maximal
curves over $\F_{64}$ is determined. The proof uses the vanishing of
the third iterate of the Cartier operator. We prove that a
nonhyperelliptic curve of genus five in characteristic two satisfying
this condition is nontrigonal. Its canonical theta characteristic
defines a separable cover of degree four with one geometric branch
value. The two possible ramification types give either a rational
subcanonical point or a point bound obtained from the cubic resolvent.
Both cases exclude maximality over $\F_{64}$.
\end{abstract}
\maketitle

\section{Introduction}\label{sec:intro}

Let $C$ be a smooth, projective, geometrically irreducible curve of
genus $g$ defined over a finite field $\F_Q$. The Hasse--Weil bound is
$\#C(\F_Q)\le Q+1+2g\sqrt Q$. When $Q=q^2$, the curve is called maximal if
\[
 \#C(\F_{q^2})=q^2+1+2gq.
\]
We denote by $N_Q(g)$ the largest number of rational points on a curve
of genus $g$ over $\F_Q$.

In this paper we consider maximal curves of genus five over $\F_{64}$.
In this case the Hasse--Weil bound is $145$. The manYPoints entry for
$Q=64$, $g=5$, submitted by E.~W. Howe, gives a curve with $140$ rational
points and records the exclusion of $141,142,143,144$ by the methods of
Howe--Lauter~\cite{HL}. We give a direct verification of these numerical
inputs in Appendix~\ref{app:numerical}. Thus
\begin{equation}\label{eq:known-values}
 N_{64}(5)\in\{140,145\}.
\end{equation}
The existence of a maximal curve of genus five is also the remaining
question in the determination of the genus spectrum over $\F_{64}$;
see~\cite[Section~4]{ATT}. Our main result answers this question.

\begin{theorem}\label{thm:main}
There is no maximal curve of genus five over $\F_{64}$.
Consequently, $N_{64}(5)=140$.
\end{theorem}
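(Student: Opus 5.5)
We argue by contradiction. Suppose $C$ is a maximal curve of genus five over $\F_{64}$, so $q=8$ and $\#C(\F_{64})=145$. The plan follows the abstract. First derive strong geometric constraints in characteristic two from maximality. Then use the canonical model and a theta characteristic to produce a low-degree map whose fibres can be counted against $145$.

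\textbf{Step 1 (Cartier operator).} Maximality gives the Frobenius relation $\Phi+q^{2}\Phi^{-1}\equiv$ const on the Jacobian; equivalently $F^2$ acts as $-q=-8$ on the Jacobian, so $\mathrm{Fr}_{64}=-8$. Hence $F^{6}=(F^2)^3$ is divisible by $8=2^3$ on the Dieudonné module. Reducing modulo $2$ on $H^0(C,\Omega^1)$, this should give $\Ca^3=0$ for the Cartier operator; one uses the standard passage from $V^3\equiv 0 \pmod{2}$ on the Dieudonné module to the vanishing of the third iterate on differentials. In particular $C$ has $p$-rank zero and is supersingular.

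\textbf{Step 2 (shape of the canonical model).} A hyperelliptic curve in characteristic two has $\Ca$ acting with a computable nilpotency index, so I would first exclude the hyperelliptic case. For $g=5$ the $2$-rank zero hyperelliptic curves have the form $y^2+y=f(x)$, and the Hasse--Witt matrix should show $\Ca^3\neq 0$; alternatively, rule them out by point counts. In the nonhyperelliptic case $C$ is a canonical curve in $\PP^4$. I would prove that it is not trigonal. If it were trigonal, it would lie on a rational normal scroll, and the trigonal (Maroni) model $F(x,y)=0$ on the scroll allows an explicit basis of differentials. I expect $\Ca^3=0$ to force a contradiction there, for example via the Hasse--Witt matrix computed from coefficients of $F^{2^k-1}$. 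So $C$ is a complete intersection of three quadrics.

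\textbf{Step 3 (theta characteristic and the degree four cover).} In characteristic two the canonical class has a distinguished square root $\kappa$, defined by $dF=0$ differentials or exact forms. With $\Ca^3=0$ and nontrigonality, I would show that $h^0(\kappa)=2$, so $|\kappa|$ is a base-point-free $g^1_4$ defined over $\F_{64}$. This gives a separable map $\pi\colon C\to\PP^1$ of degree four. The Cartier condition should force all ramification over a single geometric point, which is then necessarily rational. Riemann--Hurwitz with different exponent $2g-2+8=16$ over one point leaves two ramification types: a single totally ramified point, or a split such as $2P+2Q$ with wild contributions.

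\textbf{Step 4 (point counts — the main obstacle).} In the totally ramified case $4P\sim\kappa$ with $P$ rational, so $8P\sim K$ and $P$ is a rational subcanonical point. For a maximal curve, $(q+1)P_0\sim qP+\mathrm{Fr}(P)$ gives $9P\sim qP+\mathrm{Fr}(P)=9P$. Then the Weierstrass semigroup at $P$ contains $9$ and has largest gap $8$ (since $2g-1=9$). A gap/semigroup analysis using the Fuhrmann--Torres type constraints should show that this is incompatible with $g=5$, or that such $C$ is covered by the Hermitian curve in a way the known classification excludes. In the other case, I would bound $\#C(\F_{64})$ through fibres. Over the $64$ unramified rational points of $\PP^1$, each fibre contains $0$, $1$, $2$ or $4$ rational points according to the Galois action on the quartic, and the pattern is governed by the cubic resolvent. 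Counting rational roots of the resolvent (a degree three cover of $\PP^1$ of lower genus, itself bounded by Hasse--Weil) should bound the total below $145$. I expect this resolvent count to be the hardest step, together with the non-trigonality proof.
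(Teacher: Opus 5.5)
Your outline has the same architecture as the paper: $\Ca^3=0$, nontrigonality, the half-canonical $g^1_4$, a unique rational branch value, then a subcanonical case and a cubic-resolvent case. The difference is that the load-bearing steps are only announced, and both final cases fail to close as you wrote them.

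\emph{Subcanonical case.} Here $9P\sim qP+\Phi_{64}(P)$ is an identity for rational $P$ and gives nothing. Also, $K_C\sim 8P$ does not make $8$ the largest gap. It gives $h^0(9P)=5=h^0(8P)$, so $9=2g-1$ \emph{is} a gap. No semigroup classification is needed. Choose a rational $P_0\ne P$. The fundamental equivalence gives $9P_0\sim 9P$, so $P$ is not a base point of $|9P|$, and $9$ is a nongap, a contradiction. This is Lemma~\ref{lem:maximal}.

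\emph{Case $2P_1+2P_2$.} A Hasse--Weil bound on a resolvent $R$ of positive genus is not enough. Grant the fiberwise inequality $n_a\le 1+r_a$ and that two rational points of $R$ lie over $b$. If $R$ has genus one, you only get $\#C(\F_{64})\le 64+(81-2)+2=145$, with no contradiction. You need $R\cong\PP^1$. The paper proves this with the explicit parametrization $x=z+\kappa a_0/z^2+b_0$, which is only available after a normal form.

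That normal form is the idea missing from your Step~3. Your claim that the Cartier condition ``should force all ramification over a single geometric point'' has no mechanism behind it. Moreover, Riemann--Hurwitz with total different $16$ does not cut the fiber down to $4P$ or $2P+2Q$: a fiber $2P+Q+R$ with $d_P=16$ is numerically allowed.

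The paper proceeds as follows. It writes the canonical model as $X^2+YZ=q_2=q_3=0$ with $Y,Z$ spanning $\ker\Ca$. It computes the Hasse--Witt matrix from coefficients of $q_1q_2q_3$. It then uses three facts: the $Y,Z$ rows vanish, $\rk H=3$, and Frobenius is square-zero on $\im F$. The last is a consequence of $\Ca^3=0$, not of supersingularity. These lead to equations $T^2+\cdots$, $U^2+\cdots$ of an additive cover whose relative Jacobian determinant is $\alpha x^4+\beta$, a fourth power. The fibers are then torsors under a length-four group scheme. This forces equal multiplicities and produces the ramification $(2,1)$ on the resolvent.

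Nontrigonality is likewise only expected in your plan. The paper proves it by mapping with $|K_C-A|$ to a plane quintic with one node or cusp, and applying the St\"ohr--Voloch Cartier formula to the adjoint basis.

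As it stands, the proposal restates the strategy but omits each of the arguments that make it work.
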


The main tool is the Cartier operator $\Ca$. A maximal curve over
$\F_{2^{2m}}$ satisfies $\Ca^m=0$; see~\cite[Theorem~3.3]{GT}.
For a nonhyperelliptic curve of genus five, the condition $\Ca^3=0$
and Clifford's theorem give $a(C)=2$. The canonical theta
characteristic thus has two independent sections.
We use the associated pencil to prove the following result over any
finite field of characteristic two.

\begin{theorem}\label{thm:structural}
Let $C/\F_Q$ be a nonhyperelliptic curve of genus five in characteristic
two, and suppose that $\Ca^3=0$. Then $C$ is nontrigonal, and its
canonical theta characteristic $\theta$ defines a separable morphism
\[
 \rho:C\longrightarrow\PP^1
\]
of degree four with a unique geometric branch value
$b\in\PP^1(\F_Q)$. Exactly one of the following holds:
\begin{enumerate}[label=\textup{(\roman*)},leftmargin=2.2em]
\item $\rho^{-1}(b)=4P$ for a point $P\in C(\F_Q)$, and $K_C\sim8P$.
\item $\rho^{-1}(b)=2P_1+2P_2$ for distinct geometric points $P_1,P_2$.
      The cubic resolvent of $\rho$ is geometrically irreducible of
      genus zero, and
      \[
       \#C(\F_Q)\le2Q+1.
      \]
\end{enumerate}
\end{theorem}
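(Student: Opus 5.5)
The proof has three parts: construct the pencil $|\theta|$ and its map $\rho$; show $\rho$ has a single wildly ramified branch value; then split on the ramification type over it.

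\textbf{Step 1: the pencil.} In characteristic two every exact differential $dh$ has divisor of the form $2D$, and all such $D$ are linearly equivalent. This gives the canonical theta characteristic $\theta$ with $2\theta\sim K_C$ and $H^0(\theta)\cong\ker\Ca$, so $h^0(\theta)=a(C)$.

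\begin{itemize}
\item \emph{Lower bound.} A nilpotent operator of index at most $3$ on the $5$-dimensional space $H^0(K_C)$ has kernel of dimension at least $\lceil 5/3\rceil=2$.
\item \emph{Upper bound.} Clifford's theorem, with $C$ nonhyperelliptic, gives $h^0(\theta)\le 2$.
\end{itemize}

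So $a(C)=2$ and $|\theta|$ is a $g^1_4$. Choose a basis $dh_0,dh_1$ of the exact differentials and put $f=dh_1/dh_0$; up to a square-root twist, $f$ is the ratio of the two sections of $\theta$.

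Next I need $|\theta|$ base-point free and $C$ nontrigonal; I expect this to be the main obstacle.

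\begin{itemize}
\item A base point $P$ would give a $g^1_3=|\theta-P|$, so it suffices to rule out trigonality.
\item Suppose $C$ is trigonal with $g^1_3$ given by $x$. I would write the canonical image on the scroll $\mathbb F_1$ or $\mathbb F_3$ with $x$ and $y$ coordinates, and compute $\Ca$ explicitly on the standard basis $\{dx,\,x\,dx,\,\dots\}\cdot(\text{cubic discriminant})^{-1}$.
\item The goal is to show that $\Ca^3=0$ forces an exact differential to vanish on a fibre of $x$ with even multiplicities beyond what degree $8$ allows. That would yield $h^0(\theta)\ge 3$ or a contradiction with the Maroni invariant.
\item I would first try to make this argument uniform via the identity $\Ca(g^2\omega)=g\,\Ca(\omega)$. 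Fibre-wise computation is the fallback.
\end{itemize}

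Given base-point freeness, $\rho=(s_0:s_1)$ has degree $4$. It is separable, because an inseparable $\rho$ would factor through Frobenius and make $\theta$ twice a $g^1_2$, which contradicts nonhyperellipticity.

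\textbf{Step 2: one branch value.} Since $f^2dh_0=$ (a multiple of) $dh_1$ and both forms are exact, applying $\Ca$ gives $f\,\Ca(dh_0)\sim\Ca(dh_1)$. Combining this with $\Ca^3=0$ should force $d\rho$ to vanish only above a single value $b$.

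Riemann--Hurwitz gives $\deg R=2\cdot5-2+2\cdot4=16$, which is too large for tame ramification. The ramification is therefore wild, and all of it sits over the Frobenius-invariant value $b$, so $b\in\PP^1(\F_Q)$. The possible fibres are $4P$ or $2P_1+2P_2$; a fibre $2P_1+P_2+P_3$ cannot carry the required different degree $16$.

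\textbf{Step 3(i): the fibre is $4P$.} Here $P$ is the unique point over a rational value, so $P\in C(\F_Q)$. Since $\theta\sim4P$, we get $K_C\sim2\theta\sim8P$.

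\textbf{Step 3(ii): the fibre is $2P_1+2P_2$.}

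\begin{itemize}
\item \emph{The resolvent curve.} Form the cubic resolvent $R\to\PP^1$, which parametrizes pairings of the four sheets. It is unramified away from $b$.
\item \emph{Its genus.} The monodromy over $b$ has type $(12)(34)$ in the Galois closure. I will compute the genus of $R$ from its different over $b$ and expect it to be $0$. Geometric irreducibility follows because the monodromy group is transitive on pairings, as it contains elements beyond the Klein group.
\item \emph{Frobenius cycle types.} For $t\in\PP^1(\F_Q)\setminus\{b\}$, the Frobenius cycle type on the fibre gives the following counts $(\#C_t,\#R_t)$:

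\begin{center}
\begin{tabular}{lc}
identity & $(4,3)$\\
transposition & $(2,1)$\\
double transposition & $(0,3)$\\
$3$-cycle & $(1,0)$\\
$4$-cycle & $(0,1)$
\end{tabular}
\end{center}

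\item \emph{The inequality.} Let $a,\beta,c,d,e$ be the numbers of unramified rational values of each type, in the order of the table, so $a+\beta+c+d+e=Q$. Let $C'$ and $R'$ denote the rational points lying over these unramified values. Then
\[
2Q-\#C'=(Q-\#R')+4d+2e\ \ge\ Q-\#R'.
\]
\item \emph{Using genus zero.} Since $R$ has genus $0$, $\#R(\F_Q)=Q+1$, so $\#R'=Q+1-r_b$. Here $r_b\ge1$ is the number of rational points of $R$ over $b$, because that fibre contains a Frobenius-fixed point.
\end{itemize}

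Hence $\#C'\le 2Q+1-r_b$. The points over $b$ contribute at most $2$ to $\#C(\F_Q)$. The bound $\#C(\F_Q)\le 2Q+1$ then follows once the fibre of $R$ over $b$ is checked to contain enough rational points relative to $\{P_1,P_2\}\cap C(\F_Q)$.
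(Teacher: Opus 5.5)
\textbf{There are genuine gaps.} Step~1 is fine. Your separability argument is also fine, and is a legitimate alternative to the paper's, which reads separability off a nonzero Jacobian determinant. But every later step that carries real content is either missing or incorrect.

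\emph{Trigonality.} Nontrigonality is only a plan. The paper needs an actual computation here: the plane quintic given by $|K_C-A|$, then $\ker\Ca=\langle x^2\omega,y^2\omega\rangle$ via the unique rank-three quadric of the net, then the St\"ohr--Voloch formula~\eqref{eq:plane-Cartier} to rule out both the node and the cusp.

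\emph{Step~2 has no mechanism.} Since $dh_0,dh_1$ are exact, $\Ca(dh_0)=\Ca(dh_1)=0$, so your relation $f\,\Ca(dh_0)\sim\Ca(dh_1)$ reads $0=0$. The Riemann--Hurwitz total $16$ does not localize the ramification either. In characteristic two every point with $e=2$ is automatically wild, and the numerics are equally compatible with eight branch values, each carrying one point with $e=2$ and different exponent $2$.

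The missing idea is Lemma~\ref{lem:normal}. Write the canonical model as $V(q_1,q_2,q_3)$, with $q_1=X^2+YZ$ coming from the theta pencil. Then use the Hasse--Witt formula~\eqref{eq:HW-formula} and the square-zero condition $AA^{(2)}=0$ on $\im F$ to remove the $TU$-term and the $XT$, $XU$ terms. After this, $\rho$ is the additive cover~\eqref{eq:additive-cover}, whose relative Jacobian determinant $\alpha x^4+\beta$ is the fourth power of a linear form.

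\emph{Fibre types.} Your exclusion of the fibre $2P_1+P_2+P_3$ is unjustified: a single point with $e=2$ and ramification break $15$ has different exponent exactly $16$. The paper excludes this fibre because every fibre of~\eqref{eq:additive-cover} is a torsor under a length-four additive group scheme. Hence all points in a fibre have equal multiplicity.

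\emph{Step~3(ii).} The genus of $R$ is only ``expected''. Without the ramification filtration over $b$ you cannot compute it.

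Your irreducibility argument is false. $C_4$ and $D_4$ contain elements outside the Klein group, yet each fixes one of the three pairings, so the resolvent would be reducible. Transitivity on pairings requires the image of the monodromy group in $S_3$ to contain a $3$-cycle.

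Your count also gives only $\#C(\F_Q)\le 2Q+3-r_b$. You therefore need $r_b\ge 2$, which you leave open.

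The paper obtains all of this from the explicit resolvent~\eqref{eq:resolvent-cubic}, parametrized by $x=z+\kappa a_0/z^2+b_0$. It is geometrically irreducible of genus zero. Its two points over $b$ have ramification indices $2$ and $1$, so Frobenius fixes both. Hence exactly $Q-1$ rational points of $R$ lie over the unramified values.

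Finally, a small slip: the correction term in your identity should be $4c+2e$, with $c$ counting double transpositions, not $4d+2e$.
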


To prove this theorem, we first exclude the trigonal case by applying
the plane Cartier formula of St\"ohr--Voloch~\cite{SV}.
In the nontrigonal case, the canonical model is an intersection of
three quadrics. The coefficient formula for the Hasse--Witt matrix
gives a normal form in which the relative Jacobian determinant of
$\rho$ is the fourth power of a linear form. The two possible
exceptional fibers are then treated separately. For the fiber
$2P_1+2P_2$, we give an explicit parametrization of the cubic resolvent
and deduce the point bound.

For a maximal curve of genus five over $\F_{64}$, the first alternative
in Theorem~\ref{thm:structural} is excluded by the fundamental
equivalence for maximal curves. The second gives $145\le129$.
This proves Theorem~\ref{thm:main}.

In Section~\ref{sec:prelim} we collect the facts used in the proofs.
Section~\ref{sec:trigonal} treats the trigonal case.
The normal form and the cubic resolvent are studied in
Sections~\ref{sec:normal} and~\ref{sec:resolvent}.
The application to $\F_{64}$ is given in Section~\ref{sec:application}.
The appendices contain the full coefficient matrix and the numerical
verification used in~\eqref{eq:known-values}.

\section{Preliminaries}\label{sec:prelim}

Throughout, a curve is smooth, projective and geometrically
irreducible. Hyperellipticity and trigonality are understood over an
algebraic closure. Unless otherwise stated, $k$ is algebraically closed
of characteristic two. We write $K_C$ for a canonical divisor and
$J_C$ for the Jacobian, and use additive notation for line bundles.

\subsection{Cartier and the canonical theta characteristic}

Let $\Ca$ denote the Cartier operator on rational differentials. For a
separating function $z\in k(C)$, every rational differential is written
uniquely as
\[
 (a^2+b^2z)\,dz,\qquad a,b\in k(C),
\]
and
\[
 \Ca\bigl((a^2+b^2z)\,dz\bigr)=b\,dz.
\]
Thus $\Ca$ is inverse-Frobenius-semilinear and its kernel consists of
exact differentials.

The divisor of $dz$ has even coefficients, since only odd powers
contribute to the derivative of a local Laurent expansion. Thus
\begin{equation}\label{eq:theta}
 B_z=\tfrac12\divi(dz),\qquad \theta=\OO_C(B_z)
\end{equation}
defines a theta characteristic of degree $g(C)-1$.
If $z'$ is another separating function, then $dz'/dz$ is a square in
$k(C)$, so the class of $\theta$ is independent of $z$.
There is a semilinear bijection
\begin{equation}\label{eq:theta-kernel}
 H^0(C,\theta)\longrightarrow
 \ker\bigl(\Ca:H^0(C,\omega_C)\to H^0(C,\omega_C)\bigr),
 \qquad h\longmapsto h^2dz.
\end{equation}
Thus
\[
 \theta^{\otimes2}\simeq\omega_C,\qquad
 h^0(\theta)=a(C):=\dim_k\ker\Ca.
\]
We call $\theta$ the canonical theta characteristic; see
\cite[Propositions~3.1 and~3.3]{SV} and \cite[Remark~4.3]{Dra26}.
The construction works over a perfect field. In particular, if $C$
is defined over $\F_Q$, then $\theta$ and its complete linear system
are defined over $\F_Q$.

\begin{lemma}\label{lem:rank}
Let $C$ be nonhyperelliptic of genus five and suppose that $\Ca^3=0$.
Then
\begin{equation}\label{eq:ranks}
 h^0(\theta)=2,\qquad \rk\Ca=3,\qquad \rk\Ca^2=1.
\end{equation}
\end{lemma}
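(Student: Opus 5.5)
The plan is to combine Clifford's theorem, applied to the canonical theta characteristic $\theta$, with an elementary analysis of the Jordan-type structure of the nilpotent semilinear operator $\Ca$ on the $5$-dimensional space $V=H^0(C,\omega_C)$.

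\emph{Step 1: an upper bound for $a(C)$.} Since $\Ca^3=0$, $\Ca$ is nilpotent on $V$, so $\ker\Ca\neq0$. By \eqref{eq:theta-kernel}, $h^0(\theta)=a(C)\ge1$. Because $\theta^{\otimes2}\simeq\omega_C$, we have $h^0(K_C-\theta)=h^0(\theta)\ge1$, so a divisor $D$ of $\theta$ is effective and special. Its degree is $4$, which lies strictly between $0$ and $2g-2=8$. Clifford's theorem gives $h^0(\theta)-1\le 2$, and equality forces $C$ to be hyperelliptic. Since $C$ is nonhyperelliptic, $a(C)=h^0(\theta)\le 2$.

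\emph{Step 2: the rank sequence of a nilpotent semilinear map.} We work over the perfect field $k$, and $\Ca$ is $p^{-1}$-linear. Hence the image of a $k$-subspace is a $k$-subspace, and rank--nullity holds for $\Ca$ restricted to any subspace. Put $r_i=\rk\Ca^i$, so that $r_0=5$ and $r_3=0$. Set $d_i=r_{i-1}-r_i$ for $i=1,2,3$. The map $\Ca$ sends $\im\Ca^{i-1}$ onto $\im\Ca^i$, with kernel $\ker\Ca\cap\im\Ca^{i-1}$. Therefore
\[
 d_i=\dim\bigl(\ker\Ca\cap\im\Ca^{i-1}\bigr).
\]
Since the images are nested, $\im\Ca^{i}\subseteq\im\Ca^{i-1}$, this gives $d_1\ge d_2\ge d_3\ge0$. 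Moreover $d_1=a(C)\le2$ by Step 1, and $d_1+d_2+d_3=r_0-r_3=5$.

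\emph{Step 3: conclusion.} Non-increasing nonnegative integers, each at most $2$, with sum $5$ must be $(d_1,d_2,d_3)=(2,2,1)$. Hence $h^0(\theta)=a(C)=2$, $r_1=5-2=3$ and $r_2=3-2=1$, which is \eqref{eq:ranks}.

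The only point needing care is Step 2. One must check that semilinearity does not break the usual linear-algebra facts, namely that images are subspaces and that dimension counts hold. Perfectness of $k$ guarantees both. The Clifford step is standard once we note that $\theta$ is effective and special, which follows from $a(C)\ge1$.
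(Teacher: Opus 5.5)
Your proof is correct and follows essentially the paper's route: Clifford's theorem, including its equality case, caps $h^0(\theta)=a(C)$ at $2$, and the linear algebra of the nilpotent semilinear operator $\Ca$ on the five-dimensional space then forces the rank pattern. Your non-increasing sequence of rank differences $(d_1,d_2,d_3)=(2,2,1)$ does in one step what the paper splits into two: the kernel filtration bound $5\le 3\dim\ker\Ca$, and the observation that $\Ca$ restricted to its three-dimensional image has square zero but is not zero.
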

\begin{proof}
Since $\Ca^3=0$, the kernel filtration gives
$5\le3\dim\ker\Ca$, and hence $\dim\ker\Ca\ge2$.
Clifford's theorem gives $h^0(\theta)\le3$. Equality would imply that
$C$ is hyperelliptic, as $\theta$ has degree four and is neither
trivial nor canonical. Thus $h^0(\theta)=2$ and $\rk\Ca=3$.
The restriction of $\Ca$ to its image has square zero. Since that
image has dimension three, $\rk\Ca^2\le1$. Equality holds, for
otherwise $\im\Ca\subseteq\ker\Ca$, which is impossible.
\end{proof}

Let $F$ denote Frobenius on $H^1(C,\OO_C)$. Serre duality satisfies
\[
 \langle Fv,\omega\rangle=\langle v,\Ca\omega\rangle^2.
\]
For dual bases, if $H$ is the matrix of $F$, then
\[
 F(v)=Hv^{(2)},\qquad
 F^3(v)=HH^{(2)}H^{(4)}v^{(8)}.
\]
Superscripts in parentheses denote entrywise powers.
The Cartier matrix in the dual basis is $(H^t)^{(1/2)}$.
Thus a canonical coordinate in $\ker\Ca$ corresponds to a zero row
of $H$, and $\Ca^3=0$ is equivalent to $F^3=0$.
All statements about nilpotence below concern semilinear operators.
For a Frobenius-semilinear operator with matrix $A$, the square-zero
condition is $AA^{(2)}=0$.

\subsection{Two formulas for the Cartier and Hasse--Witt matrices}

A nonhyperelliptic, nontrigonal curve of genus five has a canonical
model
\[
 C=V(q_1,q_2,q_3)\subset\PP^4
\]
which is a complete intersection of three quadrics; see, for example,
\cite{Dra24,Dra26}. Put $\xx=(X_0,\ldots,X_4)$ and let $e_i$ be the
$i$th standard vector. With compatible residue and Serre-dual bases,
its Hasse--Witt matrix is
\begin{equation}\label{eq:HW-formula}
 H_{ij}=[\xx^{\one+2e_j-e_i}](q_1q_2q_3),
 \qquad 0\le i,j\le4.
\end{equation}
This is \cite[Equation~(2.2)]{Dra26}. It also follows directly from the
Koszul description
\[
 H^1(C,\OO_C)\simeq H^4(\PP^4,\OO_{\PP^4}(-6)).
\]
The latter space has basis represented by
$1/(X_0\cdots X_4X_i)$; Frobenius squares a representative and then
multiplies by $q_1q_2q_3$, giving~\eqref{eq:HW-formula}.
At fixed coordinates, replacing the generators by
$(q'_1,q'_2,q'_3)^t=R(q_1,q_2,q_3)^t$ multiplies $H$ by $\det R$.
Indeed, a term with a repeated factor has the form $q_i^2q_j$ and
cannot contribute to a monomial with four odd exponents, whereas
every exponent vector in~\eqref{eq:HW-formula} has four odd entries.
Coordinate changes also change the cohomology basis; rank and
semilinear nilpotence are intrinsic.

For a plane model $\Phi(x,y)=0$ with $\Phi_y\ne0$, set
$\omega=dx/\Phi_y$. The plane Cartier formula in characteristic two is
\begin{equation}\label{eq:plane-Cartier}
 \Ca(h\omega)=
 \left(\frac{\partial^2(\Phi h)}{\partial x\,\partial y}\right)^{1/2}
 \omega.
\end{equation}
For polynomial $h$, the expression under the square root has only
even powers of $x,y$. We apply this formula to adjoint polynomials,
which represent regular differentials on the normalization;
see~\cite{SV}.

\subsection{Maximal curves}

For a maximal curve over $\F_{q^2}$, the Frobenius endomorphism of its
Jacobian is $[-q]$. If $P_0\in C(\F_{q^2})$ and $\Phi_{q^2}$ is the
Frobenius morphism of the curve, the fundamental equivalence is
\begin{equation}\label{eq:fundamental}
 qP+\Phi_{q^2}(P)\sim(q+1)P_0
\end{equation}
for every geometric point $P$; see~\cite[Section~1]{KT}.
In particular, $(q+1)P\sim(q+1)P_0$ for rational $P$.
The following lemma combines~\cite[Theorem~3.3]{GT} with a consequence
of~\eqref{eq:fundamental}.

\begin{lemma}\label{lem:maximal}
A maximal curve over $\F_{2^{2m}}$ satisfies $\Ca^m=0$.
If $C/\F_{64}$ is maximal of genus five, there is no
$P\in C(\F_{64})$ with $K_C\sim8P$.
\end{lemma}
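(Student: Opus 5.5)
The first assertion is exactly \cite[Theorem~3.3]{GT}, so only the second needs an argument. I would argue by contradiction. The idea is to compare two facts about the linear system $|9P|$ at the supposed point $P$: Riemann--Roch forces $P$ to be a base point of $|9P|$, while the fundamental equivalence~\eqref{eq:fundamental} produces a divisor in $|9P|$ that avoids $P$.

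Suppose $C/\F_{64}$ is maximal of genus five, so $q=8$, and suppose $P\in C(\F_{64})$ satisfies $K_C\sim8P$. Then $h^0(8P)=h^0(K_C)=5$. Since $\deg 9P=9>2g-2=8$, Riemann--Roch gives $h^0(9P)=9-5+1=5$. Hence $L(8P)=L(9P)$, so $9$ is a gap at $P$. Equivalently, every divisor in $|9P|$ contains $P$, that is, $P$ is a base point of $|9P|$.

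Next I apply~\eqref{eq:fundamental} with $P_0=P$, which is allowed because $P$ is rational. Let $R$ be any geometric point of $C$ not in $C(\F_{64})$; such points exist because $C(k)$ is infinite. Then $8R+\Phi_{64}(R)\sim9P$. Since $R$ is not rational, $\Phi_{64}(R)$ is not rational either, so $P\ne R$ and $P\ne\Phi_{64}(R)$. Thus the effective divisor $8R+\Phi_{64}(R)$ lies in $|9P|$ and does not contain $P$. This means $P$ is not a base point of $|9P|$, so $h^0(9P)=h^0(8P)+1$, contradicting the previous paragraph.

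There is no serious obstacle here. The only point needing care is to choose $R$ non-rational, so that $\Phi_{64}(R)$ cannot coincide with $P$.
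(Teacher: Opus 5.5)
Your proof is correct and follows the paper's argument. Riemann--Roch makes $P$ a base point of $|9P|$, and the fundamental equivalence~\eqref{eq:fundamental} then produces a member of $|9P|$ that avoids $P$. The only difference is the choice of that member: the paper takes $9P_0$ for a second rational point $P_0\ne P$, using $\#C(\F_{64})=145$, while you take $8R+\Phi_{64}(R)$ for a non-rational geometric point $R$, which works equally well and does not need a second rational point.
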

\begin{proof}
The first assertion is~\cite[Theorem~3.3]{GT}, with $p=2$ and $n=m$.

Now let $g(C)=5$ and suppose that $K_C\sim8P$ for a rational point $P$.
Riemann--Roch gives
\[
 h^0(8P)=5=h^0(9P),
\]
so $P$ is a base point of $|9P|$. Choose a rational point $P_0\ne P$;
such a point exists since $\#C(\F_{64})=145$.
By~\eqref{eq:fundamental}, the divisor $9P_0$ belongs to $|9P|$ and
does not contain $P$. This is a contradiction.
\end{proof}

\section{The trigonal case}\label{sec:trigonal}

We first exclude the trigonal case.

\begin{proposition}\label{prop:trigonal}
A nonhyperelliptic trigonal curve of genus five in characteristic two
does not satisfy $\Ca^3=0$.
\end{proposition}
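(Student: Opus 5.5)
The plan is to use the standard plane model of a trigonal genus-five curve and compute the Cartier operator with the plane formula~\eqref{eq:plane-Cartier}.

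\emph{Plane model.} A nonhyperelliptic trigonal curve of genus five has a $g^1_3$. The residual series $|K_C-g^1_3|$ is a $g^2_5$. It maps $C$ birationally onto a plane quintic with exactly one singular point, a node or a cusp, possibly infinitely near. Projecting from that singular point recovers the $g^1_3$. Choose coordinates so that the singular point is the point at infinity on the $y$-axis. The affine equation then takes the form $\Phi(x,y)=a_0(x)y^3+a_1(x)y^2+a_2(x)y+a_3(x)=0$ with $\deg a_i\le i+2$, and $\Phi$ still has total degree five. The adjoint polynomials are the polynomials of degree $\le2$ vanishing at the singular point. These give a basis of regular differentials $h\omega$ with $h\in\{1,x,x^2,y,xy\}$, up to a harmless normalization of which monomials are adjoint. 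Alternatively, I would use the model $\Phi=y^3+\dots$ of bidegree $(3,4)$ type coming from the Maroni-invariant scroll. The trigonal structure gives the cleaner coefficient bookkeeping. Its adjoints are $h=x^iy^j$ with $j\le1$ and $i\le 2-j$ after the standard reduction.

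\emph{Cartier matrix.} By~\eqref{eq:plane-Cartier}, $\Ca(h\omega)$ is computed by extracting from $\Phi h$ the monomials $x^{2s+1}y^{2t+1}$. The entry is then $c^{1/2}x^sy^t$. Since $\deg_y(\Phi h)\le4$, only $t\in\{0,1\}$ occur, and only the coefficients of odd powers of $y$ in $\Phi$, namely those of $a_0$ and $a_2$ (the $y^3$ and $y^1$ parts), interact with $h$. Write $\Phi h$ for $h=x^iy^j$. The odd-$y$ part comes from $a_2x^iy^{1+j}$ and $a_0x^iy^{3+j}$ when $j=0$, and from $a_1x^iy^3$ and $a_3x^iy$ when $j=1$. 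This makes the $5\times5$ Cartier matrix block-structured: the columns indexed by $h\in\{1,x,x^2\}$ involve the odd-$x$ coefficients of $a_0,a_2$, and those indexed by $h\in\{y,xy\}$ involve those of $a_1,a_3$.

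\emph{Contradiction with Lemma~\ref{lem:rank}.} By Lemma~\ref{lem:rank}, $\Ca^3=0$ forces $h^0(\theta)=2$, $\rk\Ca=3$ and $\rk\Ca^2=1$, with $F^3=0$ semilinearly. I would write out the matrix in the free coefficients, impose the existence of a $2$-dimensional kernel, and then impose $\Ca^3=0$, using normalizations of $x,y$ (translations, scaling, $y\mapsto y+\ell(x)$) to kill coefficients. The expected outcome is that the kernel conditions force enough odd coefficients to vanish that $\Phi$ becomes a polynomial in $y^2$ and $x^2$ up to terms forcing singularity. Concretely, either the vanishing forces $\partial\Phi/\partial x$ and $\partial\Phi/\partial y$ to acquire a common affine zero on the curve, contradicting that the only singularity is the one at infinity, or it forces the remaining block to be invertible so that $\Ca^3\neq0$.

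\emph{Main obstacle.} The hard step is this case analysis of the semilinear nilpotence. The kernel condition is linear, but $\Ca^3=0$ involves Frobenius twists of the entries, so one must track the chain $\ker\Ca\subset\ker\Ca^2$ explicitly. I would first exploit the block structure to show that the image of $\Ca$ restricted to $\langle y\omega,xy\omega\rangle$ must land in the kernel. Then I would reduce to a small $3\times3$ semilinear block on $\langle\omega,x\omega,x^2\omega\rangle$ and show that nilpotence of order three there is incompatible with the smoothness of the affine part of the quintic.
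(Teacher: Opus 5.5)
Your setup is sound. The residual $g^2_5$ maps $C$ birationally onto a plane quintic with one double point of $\delta$-invariant one, so nothing is infinitely near. Placing that point at $(0:1:0)$ gives $\Phi=a_0y^3+a_1y^2+a_2y+a_3$, the regular differentials are $h\omega$ with $h\in\{1,x,x^2,y,xy\}$, and \eqref{eq:plane-Cartier} reads off the monomials $x^{2s+1}y^{2t+1}$ of $\Phi h$.

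However, the proposal stops exactly where the proof has to happen, so there is a genuine gap. ``Impose a two-dimensional kernel, then impose $\Ca^3=0$'' is a Frobenius-twisted determinantal condition on the coefficients of $\Phi$, with the position of $\ker\Ca$ unknown. You state only an expected outcome, and the mechanisms you predict are neither justified nor what actually happens. Nothing forces $\Ca(y\omega)$ and $\Ca(xy\omega)$ into $\ker\Ca$. The contradiction does not come from a common affine zero of $\Phi_x,\Phi_y$ or from an invertible $3\times3$ block; it comes from the local structure of the double point. (A small slip along the way: for $h=x$ and $h=xy$, it is the even-$x$ coefficients of the $a_i$ that enter.)

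The missing idea is to locate $\ker\Ca$ before computing anything.
\begin{itemize}
\item By \eqref{eq:theta-kernel}, $\ker\Ca=\langle s_0^2dz,s_1^2dz\rangle$ for a basis $s_0,s_1$ of $H^0(\theta)$.
\item The identity $(s_0s_1dz)^2=(s_0^2dz)(s_1^2dz)$ is a rank-three quadric in the canonical ideal, and the image of its polar form is $\ker\Ca$.
\item For a trigonal curve, the canonical quadrics are the $2\times2$ minors of $\begin{pmatrix}v_0&v_1&v_3\\ v_1&v_2&v_4\end{pmatrix}$, and the only rank-three member is $v_0v_2+v_1^2$.
\item In your coordinates, with $v_0,\dots,v_4=\omega,x\omega,x^2\omega,y\omega,xy\omega$, this gives $\ker\Ca=\langle\omega,x^2\omega\rangle$. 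Hence $\Ca(\omega)=0$, so $a_0$ and $a_2$ contain only even powers of $x$.
\end{itemize}

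The contradiction then follows quickly.
\begin{itemize}
\item The tangent cone at the double point is the homogenization of $a_0$. Since $a_0$ is even in $x$, this cone is a square, which excludes a node.
\item For a cusp, $\Ca^3=0$ makes the operator induced on the three-dimensional quotient by $\ker\Ca$ square-zero, hence of rank at most one.
\item In the basis $x\omega,y\omega,xy\omega$, the $y$- and $xy$-rows of that operator have first column $(\sqrt{c_0},\sqrt{c_2})\neq0$, the square roots of the coefficients of $a_0$. Rank at most one makes the other columns multiples of it.
\item One checks that this forces the homogenized cubic term (from $a_1$) to vanish along the cuspidal tangent, contradicting $\delta=1$.
\end{itemize}
This is essentially the paper's argument, with the singular point at the origin rather than at infinity. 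Without identifying the kernel, your case analysis has no anchor, and as written the proposal does not prove the statement.
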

\begin{proof}
Suppose that $C$ is trigonal and $\Ca^3=0$, and let $A$ be a
trigonal pencil. By Lemma~\ref{lem:rank}, $h^0(\theta)=2$.
Riemann--Roch gives
\[
 \deg(K_C-A)=5,\qquad h^0(K_C-A)=3.
\]
The series $|K_C-A|$ is base-point-free. Otherwise a line bundle of
degree four would have at least three sections, contrary to the
equality case of Clifford's theorem for a nonhyperelliptic curve. The resulting morphism to $\PP^2$ has nondegenerate image.
Its degree times the degree of its image is five, so it is birational
onto a plane quintic. The quintic has arithmetic genus six and
normalization of genus five. Hence it has one singular point of
$\delta$-invariant one: an ordinary node or an ordinary cusp;
see~\cite[Sections~2--3]{Wennink}.

Put the singular point at $(0:0:1)$ and choose affine coordinates with
$\Phi_y\ne0$. Adjoint conics give the canonical basis
\begin{equation}\label{eq:adjoint-basis}
 v_0=x^2\omega,\quad v_1=xy\omega,\quad v_2=y^2\omega,
 \quad v_3=x\omega,\quad v_4=y\omega,
 \qquad \omega=\frac{dx}{\Phi_y}.
\end{equation}
The net of canonical quadrics is spanned by
\begin{equation}\label{eq:trig-net}
 v_0v_2+v_1^2,\qquad
 v_0v_4+v_1v_3,\qquad
 v_1v_4+v_2v_3.
\end{equation}
Indeed, the products of the five homogeneous adjoint conics span
twelve quartic monomials. A quartic vanishing on the irreducible
quintic must be zero. Thus the three displayed relations form the
kernel of the multiplication map on quadratic expressions in the
canonical coordinates.

Here the rank of a quadric is the least number of linear variables
needed to express it, rather than the rank of its polar form.
For the linear combination of~\eqref{eq:trig-net} with coefficients
$(\lambda,\mu,\nu)$, the five principal four-by-four Pfaffians of its
polar matrix are
\[
 \nu^2,\quad\mu\nu,\quad\mu^2,\quad\lambda\nu,\quad\lambda\mu.
\]
Thus the only rank-three quadric in the net is
$v_0v_2+v_1^2$.
If $s_0,s_1$ is a basis of $H^0(\theta)$, the three independent
canonical sections $s_0^2,s_0s_1,s_1^2$ also give a rank-three quadric.
The image of its polar form, viewed as a map
$H^0(C,\omega_C)^*\to H^0(C,\omega_C)$, is the span of $s_0^2,s_1^2$.
By~\eqref{eq:theta-kernel} and uniqueness of the rank-three quadric,
\begin{equation}\label{eq:trig-kernel}
 \ker\Ca=\langle x^2\omega,y^2\omega\rangle.
\end{equation}
In particular, $\Ca(\omega)=0$, since
$\Ca(x^2\omega)=x\Ca(\omega)$.

If the double point is a node, choose coordinates with quadratic part
$xy$. Then
\[
 \frac{\partial^2\Phi}{\partial x\,\partial y}
 =1+\alpha x^2+\beta y^2
\]
for some $\alpha,\beta\in k$.
Its square root is a nonzero linear polynomial, and cannot vanish in
the function field of the quintic. Formula~\eqref{eq:plane-Cartier}
contradicts $\Ca(\omega)=0$.

The singular point is therefore a cusp. Choose coordinates such that
its tangent is $y=0$, the coefficient of $y^2$ is one, and
$\Phi_y\ne0$. These conditions are compatible: if necessary, replace
$x$ by $x+cy$ for a suitable $c\in k$. Write
\begin{align}\label{eq:cusp}
 \Phi={}&y^2+a x^3+b x^2y+cxy^2+d y^3\notag\\
 &+e x^4+f x^3y+g x^2y^2+hxy^3+i y^4\notag\\
 &+j x^5+\kappa x^4y+\ell x^3y^2+m x^2y^3+nxy^4+o y^5,
 \qquad a\ne0.
\end{align}
Here $a\ne0$ follows from the fact that the cusp has
$\delta$-invariant one: after blowing up in the chart $y=xy_1$,
the strict transform starts with $y_1^2+ax$ and must be smooth.
Since
\[
 \Ca(\omega)=(\sqrt f\,x+\sqrt h\,y)\omega,
\]
we have $f=h=0$.
On the quotient of $H^0(C,\omega_C)$ by~\eqref{eq:trig-kernel}, with
basis $xy\omega,x\omega,y\omega$, Cartier is represented by
\begin{equation}\label{eq:cusp-quotient}
 B=\begin{pmatrix}
 \sqrt g&\sqrt m&\sqrt\ell\\
 0&\sqrt b&\sqrt a\\
 1&\sqrt d&\sqrt c
 \end{pmatrix}.
\end{equation}
The first and third columns are independent because $a\ne0$, so
$\rk B\ge2$.
On the other hand, $\Ca^3=0$ implies
$\Ca^2H^0(C,\omega_C)\subseteq\ker\Ca$. The induced operator on the
three-dimensional quotient has square zero, so its rank is at most
one. This is a contradiction.
\end{proof}

\section{A normal form for the half-canonical cover}\label{sec:normal}

Let $C/k$ be nonhyperelliptic of genus five with $\Ca^3=0$.
By Proposition~\ref{prop:trigonal}, it is nontrigonal, and
Lemma~\ref{lem:rank} gives $h^0(\theta)=2$. The pencil $|\theta|$ is
base-point-free. Indeed, a base divisor would leave a pencil of degree
at most three, contrary to the gonality of $C$.

Choose a basis $s_0,s_1$ of $H^0(C,\theta)$ and put
\[
 X=s_0s_1,\qquad Y=s_0^2,\qquad Z=s_1^2.
\]
Complete these to canonical coordinates $X,Y,Z,T,U$.
Then $Y,Z$ span $\ker\Ca$ and
\[
 q_1=X^2+YZ
\]
vanishes on the canonical curve. Its vertex $X=Y=Z=0$ is disjoint from
$C$.

\begin{lemma}\label{lem:normal}
In suitable canonical coordinates, the equations of $C$ are
\begin{align}\label{eq:normal}
 q_1={}&X^2+YZ,\notag\\
 q_2={}&T^2+Q_2(X,Y,Z)+(gY+hZ)T+(jY+\kappa Z)U,\\
 q_3={}&U^2+Q_3(X,Y,Z)+(sY+\tau Z)T+(vY+wZ)U,\notag
\end{align}
where $Q_2,Q_3$ are ternary quadratic forms and
\begin{equation}\label{eq:cross}
 gw+hv+j\tau+\kappa s=0,
 \qquad (gv+js,\,hw+\kappa\tau)\ne(0,0).
\end{equation}
\end{lemma}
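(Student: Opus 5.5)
The plan is to start from the net $\langle q_1,q_2,q_3\rangle$ with $q_1=X^2+YZ$ already in place, and read off the remaining conditions from the Hasse--Witt formula~\eqref{eq:HW-formula}. Since $Y,Z$ span $\ker\Ca$, the rows of $H$ indexed by $Y$ and $Z$ vanish. By Lemma~\ref{lem:rank}, $\rk H=3$ and $F^3=0$. Every admissible change of coordinates has to preserve the flag $\langle Y,Z\rangle\subset\langle X,Y,Z\rangle$, so $q_1$ keeps its form up to a scalar. We may also replace $q_2,q_3$ by invertible combinations of $q_2,q_3$ plus multiples of $q_1$. By the remark after~\eqref{eq:HW-formula}, this only rescales $H$ by $\det R$.

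\emph{Step 1: the $(T,U)$-part.} Write
\[
q_i=B_i(T,U)+\ell_i(X,Y,Z)T+m_i(X,Y,Z)U+Q_i(X,Y,Z),\qquad i=2,3,
\]
with $B_i$ binary quadratic forms. First I would show that $B_2,B_3$ are independent. If some combination of $q_2,q_3$ had no pure $(T,U)$-quadratic part, it would be linear in $T,U$ over $k[X,Y,Z]$. I expect to contradict either the disjointness of the vertex $X=Y=Z=0$ from $C$, or smoothness of the complete intersection along that line. Both should be checkable from the shape of the equations.

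Next, a pencil of binary quadrics in characteristic two meets the two-dimensional space $\langle T^2,U^2\rangle$ of squares. It equals that space exactly when the $TU$-coefficients of both $B_i$ vanish. So I will use the Hasse--Witt entries to kill the $TU$-coefficients. After a linear change in $(T,U)$ and in the pair $(q_2,q_3)$, this gives $B_2=T^2$ and $B_3=U^2$.

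\emph{Step 2: the cross terms.} Substitutions $T\mapsto T+\cdot$, $U\mapsto U+\cdot$ by linear forms in $X,Y,Z$ do not remove $XT$ or $XU$ terms in characteristic two. So the vanishing of the $X$-coefficients of $\ell_i$ and $m_i$ must come from the zero rows of $H$ as well. Concretely, I would expand $q_1q_2q_3$ and list the coefficients $[\xx^{\one+2e_j-e_i}]$ for $i\in\{Y,Z\}$, $j=0,\dots,4$. These ten coefficients are polynomial expressions in the unknown coefficients of $q_2,q_3$. I expect they force the $TU$-, $XT$- and $XU$-coefficients to vanish, which is exactly the shape~\eqref{eq:normal}. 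This bookkeeping is the main obstacle. The useful simplification is that a monomial with four odd exponents cannot come from a product containing a square factor, so only a few products of the shape (term of $q_1$)$\cdot$(term of $q_2$)$\cdot$(term of $q_3$) contribute.

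\emph{Step 3: the conditions~\eqref{eq:cross}.} With the normal form in hand, $H$ has zero $Y,Z$ rows, so $F$ is governed by the $3\times3$ block $H'$ on the rows $X,T,U$, together with how its image lands in $Y,Z$. I would compute the entries of $H'$ in terms of $g,h,j,\kappa,s,\tau,v,w$ and the coefficients of $Q_2,Q_3$. Then I impose $\rk H=3$ and $F^3=0$, equivalently $\rk\Ca^2=1$ and $\Ca^3=0$ semilinearly. I expect the $2\times2$ minors of $H'$ to produce the expressions $gv+js$ and $hw+\kappa\tau$. Then $\rk\Ca^2=1$ gives the nonvanishing condition, and the nilpotence relation $H'H'^{(2)}H'^{(4)}=0$ reduces to $gw+hv+j\tau+\kappa s=0$.
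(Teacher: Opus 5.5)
\textbf{The gap is in Steps~1--2.} Your Step~3 is fine once \eqref{eq:normal} is in place. Then $H$ is supported in rows $X,T,U$ and columns $X,Y,Z$, so $F^3=0$ is equivalent to $H_{XX}=gw+hv+j\tau+\kappa s=0$. The $X$-row becomes $(0,gv+js,hw+\kappa\tau,0,0)$, and it must be nonzero for $\rk H=3$; these quantities are entries of $H$, not $2\times2$ minors. Your independence argument and the observation that the pencil meets the squares also match the paper. But Steps~1--2 rest on the expectation that the ten coefficients in the $Y$- and $Z$-rows force the $TU$-, $XT$- and $XU$-coefficients to vanish, and that is false. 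Normalize the restrictions to the vertex line as $T^2$ and $U^2+\ell TU$. Let $f,i$ (resp.\ $r,u$) be the $XT,XU$-coefficients of $q_2$ (resp.\ $q_3$), let $a,b$ be the $XY,XZ$-coefficients of $q_2$, and keep the other letters as in \eqref{eq:normal}. The $Y$- and $Z$-rows contain only two distinct nontrivial entries, and their vanishing amounts to
\[
 b\ell+fw+hu+i\tau+\kappa r=0,\qquad a\ell+fv+gu+is+jr=0.
\]
Both hold for $\ell\ne0$, $a=b=f=i=r=u=0$, and also for $\ell=0$, $f=1$, $v=w=i=r=u=0$. Two relations cannot kill five coefficients, so the zero rows alone do not give the normal form.

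\textbf{The missing idea} is to use $\rk H=3$ and $F^3=0$ already at this stage, not only in Step~3. Since the $Y$- and $Z$-rows vanish and $\rk H=3$, the image of $F$ is exactly $S=\langle e_X,e_T,e_U\rangle$. Because $F^3=0$, $F$ is square-zero on $S$, so its $3\times3$ restriction matrix $A$ satisfies $AA^{(2)}=0$ and has rank at most one. The paper then argues by contradiction twice.

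\emph{Case $\ell\ne0$.} Rank at most one forces $f=i=0$. The translations $T\mapsto T+(u/\ell)X$, $U\mapsto U+(r/\ell)X$ remove $u$ and $r$, so your remark that translations cannot remove $XT$, $XU$ terms holds only once $\ell=0$. Then $H_{XX}=H_{TX}=H_{UX}=0$, the two relations give $a=b=0$, and the $T$- and $U$-rows of $H$ have rank at most one. This contradicts $\rk H=3$.

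\emph{Case $\ell=0$.} The plane $\langle e_T,e_U\rangle$ is $F$-stable with square-zero matrix $B=\adj\begin{pmatrix}f&i\\r&u\end{pmatrix}$. Changes of $(T,U)$ by $\GL_2$ act on $B$ semilinearly, up to a scalar. If $B\ne0$, it can be normalized to the single entry $r$. Then the two relations give $j=\kappa=0$, and rank at most one for $A$ gives $gw+hv=aw+bv=0$. The $X$- and $T$-rows of $H$ become dependent, so again $\rk H\le2$.

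Without an argument of this kind, your plan does not reach \eqref{eq:normal}.
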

\begin{proof}
On the vertex line of $q_1$, the remaining canonical quadrics restrict
to independent binary quadrics with no common zero. Their span
contains a nonzero square, since the coefficient of $TU$ is a linear
functional. After changing $T,U$ and the equations, we may take the
first restriction to be $T^2$. The coefficient of $U^2$ in the second
is then nonzero. Rescaling and subtracting a multiple of the first
restriction gives $U^2+\ell TU$. Using $q_1$ to remove $X^2$-terms,
we obtain
\begin{align}\label{eq:general}
 q_2={}&T^2+aXY+bXZ+cY^2+dYZ+eZ^2\notag\\
      &+(fX+gY+hZ)T+(iX+jY+\kappa Z)U,\\
 q_3={}&U^2+\ell TU+mXY+nXZ+oY^2+pYZ+\zeta Z^2\notag\\
      &+(rX+sY+\tau Z)T+(uX+vY+wZ)U.\notag
\end{align}

Let $H$ be the Hasse--Witt matrix computed by~\eqref{eq:HW-formula};
its full expression is given in Appendix~\ref{app:HW}.
Its $Y$- and $Z$-rows vanish, since $Y,Z$ span $\ker\Ca$.
The changes of $T,U$ used below preserve these canonical sections.
This vanishing is equivalent to
\begin{align}
 b\ell+fw+hu+i\tau+\kappa r&=0,\label{eq:zero-row1}\\
 a\ell+fv+gu+is+jr&=0.\label{eq:zero-row2}
\end{align}
By Lemma~\ref{lem:rank}, $\rk H=3$, so the image of Frobenius is
\[
 S=\langle e_X,e_T,e_U\rangle.
\]
Write $D=H_{XX}$, $E=H_{TX}$ and $G=H_{UX}$.
The restriction of Frobenius to $S$ has matrix
\begin{equation}\label{eq:A}
 A=\begin{pmatrix}
 D&\ell&0\\
 E&f\ell+u&i\\
 G&r&f+i\ell
 \end{pmatrix}.
\end{equation}
Since $S=\im F$ and $F^3=0$, the restriction has square zero:
\begin{equation}\label{eq:A-square-zero}
 AA^{(2)}=0.
\end{equation}
As $\dim S=3$, it follows that $\rk A\le1$.

\smallskip\noindent\emph{First, $\ell=0$.}
Suppose that $\ell\ne0$. The second column of~\eqref{eq:A} has nonzero
first entry, while the third has zero first entry. Rank at most one
therefore forces the third column to vanish, giving $i=f=0$.
The substitutions
\[
 T\longmapsto T+(u/\ell)X,\qquad
 U\longmapsto U+(r/\ell)X
\]
set $u=r=0$ and preserve $f=i=0$; terms in $X^2$ are again removed
using $q_1$. In these coordinates, rank at most one gives $E=G=0$,
and the $(1,2)$-entry of~\eqref{eq:A-square-zero} gives
$D\ell^2=0$, hence $D=0$.
Equations~\eqref{eq:zero-row1}--\eqref{eq:zero-row2} now yield $a=b=0$.

The $T$- and $U$-rows of $H$ are then supported in the columns $Y,Z$ and
form the matrix
\[
 \begin{pmatrix}jm&\kappa n\\gm&hn\end{pmatrix}.
\]
The equalities $E=G=0$ are
\[
 jn+\kappa m=0,\qquad gn+hm=0.
\]
Multiplying these equalities by $h$ and $\kappa$, respectively,
gives $n(jh+\kappa g)=0$. Hence the determinant $mn(jh+\kappa g)$
vanishes, and the displayed matrix has rank at most one. The $X$-row adds at most one to the rank of $H$, contradicting
$\rk H=3$. Hence $\ell=0$.

\smallskip\noindent\emph{Next, $f=i=r=u=0$.}
The space $W=\langle e_T,e_U\rangle$ is now Frobenius-stable, with
restriction matrix
\begin{equation}\label{eq:B}
 B=\begin{pmatrix}u&i\\r&f\end{pmatrix}.
\end{equation}
It satisfies $BB^{(2)}=0$. To describe the allowed changes of
basis, put
\[
 M=\begin{pmatrix}f&i\\r&u\end{pmatrix},\qquad B=\adj(M).
\]
For $P\in\GL_2(k)$, substitute $(T,U)^t=P(T',U')^t$ in the two
equations and multiply them by $(P^{(2)})^{-1}$ to renormalize their
pure squares. The coefficient matrix of the terms multiplied by $X$
becomes
\[
 M'=(P^{(2)})^{-1}MP.
\]
Taking adjugates gives
\begin{equation}\label{eq:semilinear-change}
 B'=\adj(M')=(\det P)^{-1}P^{-1}BP^{(2)}.
\end{equation}
Thus the allowed coordinate changes induce semilinear changes of
basis on $W$, up to a nonzero scalar. The scalar does not affect
the image, the kernel or the square-zero condition.

If $B\ne0$, the corresponding operator has rank one, and its image
and kernel are the same line. By~\eqref{eq:semilinear-change}, we may
take this line to be $ke_U$ and write
\[
 B=\begin{pmatrix}0&0\\r&0\end{pmatrix},\qquad r\ne0.
\]
Thus $u=i=f=0$. Equations~\eqref{eq:zero-row1} and
\eqref{eq:zero-row2} give $\kappa=j=0$.
Since the second column of~\eqref{eq:A} is now $(0,0,r)^t$ and
$\rk A\le1$, we have $D=E=0$, or equivalently
\begin{equation}\label{eq:two-relations}
 gw+hv=0,\qquad aw+bv=0.
\end{equation}
The $X$- and $T$-rows of $H$ are supported in columns $Y,Z$ and form
\[
 \begin{pmatrix}gv&hw\\av&bw\end{pmatrix}.
\]
Its determinant vanishes, since
\[
 vw(gb+ha)=vb(gw+hv)+vh(aw+bv)=0.
\]
There is only one further nonzero row of $H$, again contradicting
$\rk H=3$. This proves $B=0$.

We have proved $\ell=f=i=r=u=0$.
The induced operator on the one-dimensional space $S/W$ is nilpotent,
hence zero. Thus $D=0$, which is the first relation
in~\eqref{eq:cross}. The first row of $H$ is now
\[
 (0,gv+js,hw+\kappa\tau,0,0).
\]
It cannot vanish, since only two other rows can be nonzero. This proves
the second assertion of~\eqref{eq:cross} and completes the proof.
\end{proof}

\section{The quartic cover and its cubic resolvent}\label{sec:resolvent}

Let $C/k$ satisfy the hypotheses of Lemma~\ref{lem:normal}.
The pencil $|\theta|$ defines a morphism $\rho:C\to\PP^1$ of degree
four. On the affine chart $Z=1$, write $(X,Y,Z)=(x,x^2,1)$.
Its equations take the form
\begin{equation}\label{eq:additive-cover}
 \begin{pmatrix}T^2\\U^2\end{pmatrix}
 +A(x)\begin{pmatrix}T\\U\end{pmatrix}
 =\begin{pmatrix}P_2(x)\\P_3(x)\end{pmatrix},\qquad
 A(x)=\begin{pmatrix}
 gx^2+h&jx^2+\kappa\\
 sx^2+\tau&vx^2+w
 \end{pmatrix},
\end{equation}
where $P_2,P_3$ have degree at most four.
The two equations are monic with relatively prime leading monomials
$T^2,U^2$ for a degree order in $T,U$ over $k[x]$. Their quotient
algebra is therefore free of rank four over $k[x]$, with basis
$1,T,U,TU$.

\begin{lemma}\label{lem:branch}
The morphism $\rho$ is separable and has a unique geometric branch
value. The fiber over that value is either $4P$ or $2P_1+2P_2$ with
$P_1\ne P_2$.
\end{lemma}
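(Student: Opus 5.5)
The plan is to compute the ramification of $\rho$ from the local equations~\eqref{eq:additive-cover} and then pin down the fiber shape with Riemann--Hurwitz. Over the affine chart, $C$ is given by two Artin--Schreier type equations in $T,U$ over $k[x]$. The relative Jacobian of this system with respect to $(T,U)$ is
\[
 \det\begin{pmatrix}\partial_T q_2&\partial_U q_2\\ \partial_T q_3&\partial_U q_3\end{pmatrix}
 =\det A(x),
\]
because the derivatives of $T^2,U^2$ vanish in characteristic two. Expanding,
\[
 \det A(x)=(gv+js)x^4+(gw+hv+j\tau+\kappa s)x^2+(hw+\kappa\tau).
\]
By~\eqref{eq:cross} the middle coefficient is zero and the outer pair is not both zero. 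Hence $\det A=(\alpha x+\beta)^4$ with $(\alpha,\beta)\ne(0,0)$, where $\alpha^4=gv+js$ and $\beta^4=hw+\kappa\tau$. This already shows $\rho$ is separable: the Jacobian is a nonzero element of $k(x)$, so $k(C)/k(x)$ is generated by separable elements.

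Next I locate the ramification. A point of $C$ over $x=x_0$ is ramified exactly when the fiber scheme, namely the free rank-four algebra $k[T,U]/(\ldots)$ at $x=x_0$, is nonreduced, i.e.\ when $\det A(x_0)=0$. Over the affine chart the only such value is $x_0=\beta/\alpha$; if $\alpha=0$ there is none. It remains to treat $x=\infty$, using the chart $Y=1$ with coordinate $1/x$. By the symmetry $Y\leftrightarrow Z$ of the normal form, the same determinant computation applies in the reversed variable. That determinant is the reversed quartic, which vanishes only at $1/x=\alpha/\beta$. So in every case there is exactly one branch value, $b=(\beta:\alpha)$, which is consistent with the hyperplane $\alpha X+\beta Y$ or $\alpha X+\beta Z$. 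One point needs care here: the freeness argument for the affine algebra must not miss singularities at infinity, which the symmetric chart handles. A point of caution is that $\alpha$ involves fourth roots, so $b$ is geometric, and its rationality is left to the later theorem.

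The fiber shape comes from Riemann--Hurwitz: $8=2g-2=4(-2)+\deg R$, so $\deg R=16$, all concentrated over $b$. Ramification in characteristic two is wild at even indices. The differential exponent $d_P$ satisfies $d_P\ge e_P$ when $e_P$ is even and $d_P=e_P-1$ when $e_P$ is odd. Within a fiber the possible ramification profiles are $(2,1,1)$, $(2,2)$, $(3,1)$ and $(4)$.

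The main obstacle is excluding the profiles with an unramified or tamely ramified part, $(2,1,1)$ and $(3,1)$. I would handle this through the fiber algebra. Since $\det A(b)=0$ to order four and $A(b)$ has rank exactly one, which needs a check using~\eqref{eq:cross}, the fiber algebra $k[T,U]/(T^2+\ldots,U^2+\ldots)$ is a rank-four algebra in which the linear part has rank at most one. Its reduction then has at most two points, each carrying multiplicity at least two. This rules out unramified points, and with them $(2,1,1)$ and $(3,1)$, leaving $4P$ or $2P_1+2P_2$. If $A(b)=0$ occurred instead, the fiber would be a single point, giving $4P$. In either case the fiber is $4P$ or $2P_1+2P_2$, the latter with $P_1\ne P_2$ since the profile $(2,2)$ consists of two distinct points.
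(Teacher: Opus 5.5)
Your proposal is correct. The separability and branch-locus part is the paper's argument. The Jacobian of \eqref{eq:additive-cover} with respect to $(T,U)$ is $\det A(x)$, which by \eqref{eq:cross} is the fourth power of a nonzero linear form. Your use of the $Y\leftrightarrow Z$ symmetry of \eqref{eq:normal} on the chart $Y=1$ is the same as the paper's substitution $t=x^{-1}$, $T'=t^2T$, $U'=t^2U$.

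The fiber analysis differs. The paper views each fiber as a torsor under the length-four kernel of $(T,U)\mapsto(T^2,U^2)+A(T,U)$. Translations identify all local rings, so the points have equal multiplicity, and nonreducedness leaves one or two points. Your route is more elementary. The derivatives of $T^2$ and $U^2$ vanish, so the Jacobian matrix of the fiber equations at \emph{every} point of $\rho^{-1}(b)$ is the same singular matrix $A(b)$. Hence no point of the fiber is \'etale, every multiplicity is at least two, and total length four forces $4P$ or $2P_1+2P_2$. You should state this constant-Jacobian observation explicitly. It is the entire content of ``each carrying multiplicity at least two,'' which you assert rather than derive.

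Three smaller points. Only $\det A(b)=0$ is needed. Your claim that $A(b)$ has rank exactly one does not follow from \eqref{eq:cross}: the case $A(b)=0$ is compatible with it and gives $4P$. You do, however, treat that case separately. The Riemann--Hurwitz computation plays no role in the argument.

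The paper's torsor formulation gives more than the fiber shape. The $\F_2$-structure on the kernel points is reused in the construction of the cubic resolvent and in Lemma~\ref{lem:resolvent}.
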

\begin{proof}
The relative Jacobian determinant of~\eqref{eq:additive-cover} is
\begin{align*}
 \det A(x)
 &=(gv+js)x^4+(gw+hv+j\tau+\kappa s)x^2+hw+\kappa\tau\\
 &=\alpha x^4+\beta,
\end{align*}
where $(\alpha,\beta)\ne(0,0)$ by~\eqref{eq:cross}.
In homogeneous coordinates $(S_0:S_1)$ on the base, the determinant is
\begin{equation}\label{eq:branch-form}
 \alpha S_0^4+\beta S_1^4.
\end{equation}
On the second chart, put $t=x^{-1}$, $T'=t^2T$ and $U'=t^2U$.
After multiplication by $t^4$, the linear coefficient matrix is
$t^2A(t^{-1})$, with determinant $\alpha+\beta t^4$.
Thus~\eqref{eq:branch-form} describes the determinant on both charts.
The cover is finite flat, and the relative Jacobian criterion
identifies the non-\'etale locus with the vanishing of this
determinant. Over $k$, it is the fourth power of a nonzero linear
form. Therefore $\rho$ is separable and has exactly one geometric
branch value; this is infinity when $\alpha=0$.

For a fixed geometric base point, the fiber is a torsor under the
kernel of the additive map
\[
 (T,U)\longmapsto (T^2,U^2)+A(T,U).
\]
This kernel is a finite group scheme of length four. Translation by
a geometric point of the fiber identifies the fiber with the kernel.
At the branch value the kernel is nonreduced. Its geometric points
form an $\F_2$-vector space, and translations identify their local
rings. Hence it has one or two geometric points, with equal local
lengths. Since $C$ is smooth, the fiber divisor is respectively
$4P$ or $2P_1+2P_2$.
\end{proof}

\subsection{The cubic resolvent}

Over the unramified locus, a degree-four cover determines an \'etale
cover of degree three by the action of $S_4$ on the three partitions
of a four-element set into two unordered pairs. Its smooth projective
completion is called the \emph{cubic resolvent}.
This construction commutes with base change.
For a torsor under $\F_2^2$, the three partitions correspond to the
three nonzero translation vectors: a vector $v$ gives the pairs
$\{w,w+v\}$. This correspondence is Galois-equivariant and independent
of a chosen origin in the torsor. Thus the generic cubic resolvent
of~\eqref{eq:additive-cover} is the nonzero part of the homogeneous
additive kernel.

\begin{lemma}\label{lem:resolvent}
Suppose that the exceptional fiber is $2P_1+2P_2$, with $P_1\ne P_2$.
The cubic resolvent is geometrically irreducible and has genus zero.
Above the branch value, its map to $\PP^1$ has two geometric points,
with ramification indices two and one.
\end{lemma}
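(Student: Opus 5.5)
The plan is to write the cubic resolvent down explicitly as the curve of nonzero points of the additive kernel of \eqref{eq:additive-cover}, and then read off its genus and its behaviour over the branch value. Over the function field $k(x)$, the homogeneous kernel is
\[
 \bigl\{(T,U):\ (T^2,U^2)^t=A(x)(T,U)^t\bigr\}.
\]
A nonzero solution is determined by its projective class $(T:U)$ together with a scaling. From the equations $T^2=a_{11}T+a_{12}U$ and $U^2=a_{21}T+a_{22}U$, write $U=\lambda T$. Then
\[
 T=a_{11}+a_{12}\lambda,\qquad \lambda^2T=a_{21}+a_{22}\lambda .
\]
Eliminating $T$ gives the cubic
\[
 a_{12}\lambda^3+a_{11}\lambda^2+a_{22}\lambda+a_{21}=0,
\]
so the resolvent is birational to this plane curve $R$ in $(x,\lambda)$, and $T,U$ are recovered rationally from $\lambda$. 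The coefficients are
\[
 a_{11}=gx^2+h,\qquad a_{12}=jx^2+\kappa,\qquad a_{21}=sx^2+\tau,\qquad a_{22}=vx^2+w .
\]
Each is a polynomial in $x^2$, so the equation is linear in $x^2$:
\[
 x^2\,(j\lambda^3+g\lambda^2+v\lambda+s)=\kappa\lambda^3+h\lambda^2+w\lambda+\tau .
\]
The key step is therefore that $x^2$ is a rational function of $\lambda$. This gives a parametrization: the function field of the resolvent is $k(\lambda,x)$ with $x^2\in k(\lambda)$. That is either purely inseparable of degree two over $k(\lambda)$, hence of genus zero, or equal to $k(\lambda)$ when $x^2$ is a square. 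In both cases the genus is zero, provided the curve is geometrically irreducible.

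For irreducibility I will use separability of $\rho$ (Lemma~\ref{lem:branch}). If the cubic in $\lambda$ had a root in $k(x)$, the $\F_2^2$-torsor would have a rational translation vector, and the resolvent would split off a degree-one component. I will show this is incompatible with the fiber type $2P_1+2P_2$.

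In the normalized coordinates one may assume the branch value is $x=\infty$ or $x=0$. Near the branch value, the fiber $2P_1+2P_2$ means the kernel group scheme at that point has two geometric points, namely the zero vector and one nonzero reduced vector $v_0$, each of multiplicity two. So exactly one nonzero translation vector specializes to a reduced point, and the other two collide. Hence above the branch value the resolvent has one point over $v_0$ and one point where two sheets meet, with indices $1$ and $2$ respectively, or the two sheets form a node of $R$ that the normalization separates.

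To decide between these, I will compute directly with the $\lambda$-parametrization. The branch locus corresponds to $\det A=\alpha x^4+\beta=0$. Here the cubic in $\lambda$ degenerates, and its discriminant equals $(\det A)^2$ up to a square factor, or a related expression in characteristic two. Since $x\mapsto\lambda$ is given by a degree-three rational function of $\lambda$ in $x^2$, I can count preimages of the branch value $x_0$ with $x_0^2$ fixed. The degree-three map $\lambda\mapsto x^2$ has fiber over $x_0^2$ consisting of a double root plus a simple root. The double root corresponds to the two collided translation vectors. The map $\lambda\mapsto x$ is then either the identity composed with this, or handled through the purely inseparable step, which does not change the count of points or the ratios of multiplicities.

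Irreducibility follows from the same parametrization. $R$ is the image of the irreducible $\lambda$-line, provided the cubic relation is genuinely of degree three in $\lambda$, that is, the numerator and denominator above share no common factor. A common factor would make a root $\lambda_0\in k$ constant, giving a constant translation vector. I expect to rule this out using $(gv+js,\,hw+\kappa\tau)\neq(0,0)$ from \eqref{eq:cross} together with the fiber type. The case $4P$, where all three nonzero vectors collide, corresponds to a triple root of the cubic and is exactly what the hypothesis excludes.

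The main obstacle is this common-factor and degree analysis. It means showing that the degree-three map $\lambda\mapsto x^2$ does not drop degree. It also means showing that over the branch value the fiber is a double point plus a simple point and not a triple point. The triple-point case must be matched with fiber type $4P$ through the correspondence between nonzero kernel vectors and roots $\lambda$.
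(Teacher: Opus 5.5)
Your route is a genuine alternative to the paper's. The paper normalizes $A_\infty$ and substitutes $z=x+U/a_0+b_0$; you instead read the resolvent as $x^2N(\lambda)=M(\lambda)$, with $N=j\lambda^3+g\lambda^2+v\lambda+s$ and $M=\kappa\lambda^3+h\lambda^2+w\lambda+\tau$. Genus zero then follows from $k(x,\lambda)\subseteq k(\lambda)^{1/2}=k(\lambda^{1/2})$.

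\textbf{The gap.} This only reduces the lemma to two facts that you do not prove. First, $M$ and $N$ must be coprime as binary cubic forms; this is exactly geometric irreducibility. Second, with the branch value at $\infty$, $N$ must have one double and one simple root. You leave both as expectations (``the main obstacle''), so the proof is missing its essential step.

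Your heuristics for these facts are also wrong:
\begin{enumerate}
\item At the branch value both points of the kernel $\{0,v_0\}$ have length two, so $v_0$ is not reduced. Two nonzero vectors tend to $v_0$ and one tends to $0$. Hence the double root of the cubic is the direction of $v_0$, and the simple root is $\ker A(x_0)$.
\item Type $4P$ is not only the triple-root case. If $A(x_0)=0$, then $M+x_0^2N\equiv0$, so $M$ and $N$ are proportional and the resolvent splits into three constant lines.
\end{enumerate}
So coprimality is a real consequence of the fiber type, not a matter of degree bookkeeping.

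\textbf{Closing the gap.} Move the branch value to $\infty$ as the paper does, so that $A(x)=A_0+x^2A_\infty$ with $\det A_0\neq0=\det A_\infty$.
\begin{itemize}
\item Write $A_\infty=uw^t$. It is nonzero, since otherwise the kernel is $T^2=U^2=0$, a single point.
\item The fiber type $2P_1+2P_2$ means exactly $w^tu^{(1/2)}\neq0$.
\item \eqref{eq:cross}, the vanishing of the $x^2$-coefficient of $\det A(x)$, reads $w^t\adj(A_0)u=0$.
\end{itemize}
Put $F_A(d)=\det\bigl(d^{(2)},Ad\bigr)$ for $d=(T,U)^t$. Then $F_{A_\infty}=T^3N$ and $F_{A_0}=T^3M$. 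Moreover $F_{A_\infty}(d)=(w^td)\det\bigl(d,u^{(1/2)}\bigr)^2$, with double root $u^{(1/2)}$ and simple root $\ker A_\infty$; these are distinct.

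Now suppose $d$ is also a root of $F_{A_0}$, so $A_0d\in k\,d^{(2)}$.
\begin{itemize}
\item If $w^td=0$, then $\adj(A_0)u\in kd$. Hence $u\in k\,d^{(2)}$, and so $w^tu^{(1/2)}=0$.
\item If $d\in k\,u^{(1/2)}$, then $A_0d\in ku$, so $d\in k\adj(A_0)u$. Hence $w^td=0$, and again $w^tu^{(1/2)}=0$.
\end{itemize}
Both cases contradict the fiber type, which proves coprimality.

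With coprimality, your argument gives irreducibility and genus zero. With $\mu^2=\lambda$ the map is $x=\tilde M(\mu)/\tilde N(\mu)$, where tildes denote coefficientwise square roots. Its poles sit at the roots of $\tilde N$ and give ramification indices $2$ and $1$. In the paper's coordinates this is $N=\lambda^2$ and $M=\kappa\lambda^3+h\lambda^2+\tau$ with $\kappa\tau\neq0$.

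For the indices there is also a shortcut. A degree-three genus-zero cover of $\PP^1$ that is étale away from one point needs total different $4$ by Riemann--Hurwitz. A tame point of index $3$ contributes only $2$, so it cannot occur.
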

\begin{proof}
Move the branch value to infinity by changing the basis of
$H^0(\theta)$. The span of $Y,Z$ is preserved, so the equations
retain the form~\eqref{eq:normal}. Moreover, the determinant remains
a nonzero fourth power under this change; renormalizing $T,U$ only
multiplies it by a nonzero scalar. On the chart at infinity, the linear
coefficient matrix specializes to
\[
 A_\infty=\begin{pmatrix}g&j\\s&v\end{pmatrix}.
\]
Its determinant is zero. The matrix itself is nonzero, since
$A_\infty=0$ would give the kernel $T^2=U^2=0$ and a fiber with only
one geometric point. Hence $\rk A_\infty=1$.

Let $e$ be the unique nonzero geometric point of this kernel. It
satisfies $A_\infty e=e^{(2)}$. Choose a nonzero vector $f_0$ in
$\ker A_\infty$. The vectors $e,f_0$ are independent, since
$A_\infty e=e^{(2)}\ne0$.
If $P$ is the matrix with columns $e,f_0$, then
\[
 (P^{(2)})^{-1}A_\infty P=\diag(1,0).
\]
After this change of $T,U$ and renormalization of their squares, we
have $g=1$ and $j=s=v=0$.
Equation~\eqref{eq:cross} gives $w=0$, while nonvanishing of the
determinant on the affine base gives $\kappa\tau\ne0$.
The homogeneous kernel is therefore
\begin{equation}\label{eq:kernel}
 T^2+(x^2+h)T+\kappa U=0,\qquad U^2+\tau T=0.
\end{equation}
Every nonzero kernel point has $U\ne0$. Eliminating $T$ yields
\begin{equation}\label{eq:resolvent-cubic}
 U^3+\tau(x^2+h)U+\kappa\tau^2=0.
\end{equation}
Choose $a_0^2=\tau$ and $b_0^2=h$, and set $z=x+U/a_0+b_0$.
Then
\[
 z^2=\frac{\kappa\tau}{U},
 \qquad
 U=\frac{\kappa\tau}{z^2},
 \qquad
 x=z+\frac{\kappa a_0}{z^2}+b_0.
\]
To prove irreducibility, regard~\eqref{eq:resolvent-cubic} as a
quadratic equation for $x$ over $k(U)$:
\[
 x^2=\frac{U^2}{\tau}+h+\frac{\kappa\tau}{U}.
\]
The right-hand side has a simple pole at $U=0$, so it is not a square.
The defining polynomial is primitive in $k[U][x]$, hence irreducible
in $k[x,U]$ and in $k(x)[U]$. The inverse rational expressions above
identify its function field with $k(z)$.
Thus the resolvent is geometrically irreducible of genus zero.
The rational function $x=z+\kappa a_0/z^2+b_0$ has degree three and poles
of orders two and one at $z=0$ and $z=\infty$, respectively.
These are the two points above the branch value, with the asserted
ramification indices.
\end{proof}

\subsection{Descent and rational points}

\begin{proof}[Proof of Theorem~\ref{thm:structural}]
Let $C/\F_Q$ satisfy the hypotheses of the theorem.
Proposition~\ref{prop:trigonal} excludes the trigonal case.
The canonical theta characteristic and its two-dimensional section
space are defined over $\F_Q$, so the degree-four morphism $\rho$ is
defined over $\F_Q$. By
Lemma~\ref{lem:branch}, this morphism has a unique geometric branch
value $b$. Its branch locus is defined over $\F_Q$, so $b$ is rational.

If $\rho^{-1}(b)=4P$, the unique geometric point $P$ is rational and
$\theta\sim4P$. Hence $K_C\sim8P$, as asserted in~\textup{(i)}.

Otherwise the fiber has two distinct geometric points and
Lemma~\ref{lem:resolvent} applies. The cubic resolvent is intrinsic, so
its smooth projective completion $R$ is defined over $\F_Q$.
It is geometrically irreducible of genus zero. Over a finite field
such a curve is isomorphic to $\PP^1$, and hence
\[
 \#R(\F_Q)=Q+1.
\]
The two geometric points of $R$ above $b$ have different ramification
indices. Frobenius fixes each of them, so both are rational.
Consequently exactly $Q-1$ points of $R(\F_Q)$ lie above the unramified
part of the base.

For $a\in\PP^1(\F_Q)\setminus\{b\}$, write
\[
 n_a=\#\rho^{-1}(a)(\F_Q),\qquad r_a=\#R_a(\F_Q).
\]
These are the numbers of fixed points of Frobenius on a four-element
set and on its three pair-partitions. The possibilities are
\begin{equation}\label{eq:permutations}
\begin{array}{c|ccccc}
 \text{cycle type}&1^4&2\,1^2&2^2&3\,1&4\\\hline
 n_a&4&2&0&1&0\\
 r_a&3&1&3&0&1
\end{array}
\end{equation}
and in every case $n_a\le1+r_a$.
There are $Q$ rational unramified base values. Summing gives
\[
 \sum_{a\ne b}n_a\le Q+\sum_{a\ne b}r_a=Q+(Q-1)=2Q-1.
\]
The exceptional fiber contributes at most two rational points.
Thus $\#C(\F_Q)\le2Q+1$, proving~\textup{(ii)}.
\end{proof}

\section{Maximal curves over \texorpdfstring{$\F_{64}$}{F64}}\label{sec:application}

\begin{proof}[Proof of Theorem~\ref{thm:main}]
Suppose that $C/\F_{64}$ is maximal of genus five. Then
\[
 \#C(\F_{64})=64+1+2\cdot5\cdot8=145.
\]
The curve is not hyperelliptic. Indeed, the unique hyperelliptic
involution and the quotient map descend to $\F_{64}$. The quotient is
a genus-zero curve over a finite field, so a degree-two map gives
\[
 \#C(\F_{64})\le2(64+1)=130.
\]

By Lemma~\ref{lem:maximal}, $\Ca^3=0$.
Theorem~\ref{thm:structural} therefore applies. Its first alternative
gives a rational point $P$ with $K_C\sim8P$, contrary to
Lemma~\ref{lem:maximal}. Its second alternative gives
\[
 145=\#C(\F_{64})\le2\cdot64+1=129,
\]
again a contradiction. Hence no such maximal curve exists.
Finally,~\eqref{eq:known-values} gives $N_{64}(5)=140$.
\end{proof}

The theorem also completes the genus spectrum over this field.
Write
\[
 \mathcal M(Q)=\{g\ge0:\text{there is a maximal curve of genus $g$ over $\F_Q$}\}
\]
when $Q$ is a square.

\begin{corollary}\label{cor:spectrum}
The genus spectrum of maximal curves over $\F_{64}$ is
\[
 \mathcal M(64)=\{0,1,2,3,4,6,7,9,10,12,28\}.
\]
\end{corollary}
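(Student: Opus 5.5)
The corollary combines Theorem~\ref{thm:main} with the known results on the genus spectrum of maximal curves over $\F_{64}$, as summarized in~\cite[Section~4]{ATT}. I plan to establish two inclusions.

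\emph{Upper bound on genera.} Over $\F_{q^2}$ with $q=8$, Ihara's bound gives $g\le q(q-1)/2=28$. The second Fuhrmann--Torres bound gives $g\le\lfloor (q-1)^2/4\rfloor=12$ whenever $g<28$. Between $12$ and $28$ there is therefore nothing. For $g\le12$, the paper~\cite{ATT} determines which genera occur, except for $g=5$, which was the open case. I would cite~\cite{ATT} for the exclusion of $5,8,11$ and record which remaining genera are excluded there. In particular, $5$ is now excluded by Theorem~\ref{thm:main}.

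\emph{Realization of the remaining genera.} I would exhibit a maximal curve for each listed genus.
\begin{enumerate}[label=\textup{(\alph*)},leftmargin=2.2em]
\item $g=28$: the Hermitian curve $y^8+y=x^9$.
\item $g=12$: the quotient $y^8+y=x^3$, which attains $(q-1)^2/4$ rounded down, or the corresponding Hermitian quotient.
\item $g=0$: the projective line.
\item $g=1$: a supersingular elliptic curve with trace $-16$, which exists over $\F_{64}$.
\item The genera $2,3,4,6,7,9,10$: quotients of the Hermitian curve by subgroups of its automorphism group $\mathrm{PGU}(3,8)$, or Artin--Schreier and Kummer subcovers such as $y^8+y=x^k$ with $k\mid 9$ and $y^{2^i}+\dots$. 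All of these are maximal by Serre's covering result, since a curve covered by a maximal curve is maximal. The specific constructions are those listed in~\cite[Section~4]{ATT}.
\end{enumerate}

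The main point of care is bookkeeping. I need to check that the list in~\cite{ATT} left exactly $g=5$ undecided and that its exclusions of $8$ and $11$ and of $13\le g\le27$ apply as stated. With that confirmed, the corollary follows immediately from Theorem~\ref{thm:main}.
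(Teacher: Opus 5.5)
Your plan is the paper's argument. Every inclusion and exclusion other than genus five is taken from \cite{ATT} (Propositions~4.1(2), 4.2(2) and Remark~4.4(1) there), and Theorem~\ref{thm:main} removes $g=5$; the Ihara and Fuhrmann--Torres bounds and the list of examples are only supporting detail.

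That supporting detail contains two slips you should fix. First, \cite{ATT} excludes $8$ and $11$ but not $5$. Second, $y^8+y=x^3$ has genus $(8-1)(3-1)/2=7$, not $12$. A genus-$12$ maximal curve is $y^4+y^2+y=x^9$, which is the quotient of the Hermitian curve $w^8+w=x^9$ by $w\mapsto w+1$ via $y=w^2+w$.
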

\begin{proof}
The inclusions and exclusions in~\cite[Propositions~4.1(2), 4.2(2)
and Remark~4.4(1)]{ATT} leave only the possible additional genus five. Theorem~\ref{thm:main} excludes it.
\end{proof}

\begin{remark}
The hypothesis $\Ca^3=0$ is essential in Lemma~\ref{lem:normal}.
It makes Frobenius square-zero on its three-dimensional image.
This property does not follow from $2$-rank zero or supersingularity
alone. For example, the smooth supersingular curve
\[
 X^2+YZ=T^2+ZT+XY=U^2+UZ+XT+YT=0
\]
is the specialization $b_1=b_2=0$, $b_3=1$ of
\cite[Theorem~1.1]{Dra26}. Formula~\eqref{eq:HW-formula} gives
$F(e_Z)=e_X$, $F(e_X)=e_T$ and $F(e_T)=e_U$.
Thus $F^3\ne0$, and consequently $\Ca^3\ne0$.
\end{remark}

\Needspace{18\baselineskip}
\appendix
\section{The coefficient matrix}\label{app:HW}

For the quadrics $q_1=X^2+YZ$ and~\eqref{eq:general}, put
\begin{align*}
 A_0&=b\ell+fw+hu+i\tau+\kappa r,\\
 B_0&=a\ell+fv+gu+is+jr,\\
 D&=d\ell+fu+gw+hv+ir+j\tau+\kappa s,\\
 E&=aw+bv+du+ip+jn+\kappa m,\\
 G&=a\tau+bs+dr+fp+gn+hm.
\end{align*}
In the order $X,Y,Z,T,U$, formula~\eqref{eq:HW-formula} gives
\begin{equation}\label{eq:full-HW}
\begingroup
\setlength{\arraycolsep}{4pt}
\renewcommand{\arraystretch}{1.3}
\begin{pmatrix}
 D&c\ell+gv+js&e\ell+hw+\kappa\tau&\ell&0\\
 A_0&B_0&0&0&0\\
 B_0&0&A_0&0&0\\
 E&av+cu+io+jm&bw+eu+i\zeta+\kappa n&f\ell+u&i\\
 G&as+cr+fo+gm&b\tau+er+f\zeta+hn&r&f+i\ell
\end{pmatrix}.
\endgroup
\end{equation}
For example, the $XX$-entry is the coefficient of $X^2YZTU$ and the
$XT$-entry is the coefficient of $YZT^3U$ in $q_1q_2q_3$.
The former is $D$, while the latter is $\ell$.
The vanishing of the $Y$- and $Z$-rows is therefore precisely
\eqref{eq:zero-row1}--\eqref{eq:zero-row2}.

\section{The numerical bounds}\label{app:numerical}

We verify the two numerical inputs recorded in~\cite{many}.
The exclusion below uses~\cite[Theorem~3.1]{HL}; the construction
attaining $140$ points is due to Howe.

\begin{proposition}\label{prop:defects}
There is no curve of genus five over $\F_{64}$ with
$141$, $142$, $143$ or $144$ rational points.
\end{proposition}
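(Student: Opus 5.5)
Following the indication that the exclusion uses \cite[Theorem~3.1]{HL}, we pass to the real Weil polynomial. A curve of genus five over $\F_{64}$ with $N=145-\delta$ points, $1\le\delta\le4$, has Frobenius eigenvalues $\sqrt{64}\,e^{\pm i\phi_j}$. Write $x_j=-2\cos\phi_j\cdot 8$ shifted so that $y_j=16+t_j\ge0$, where $t_j=2\sqrt{64}\cos\phi_j$ are the traces. Then $\sum_j y_j=\delta$ with each $y_j$ a totally nonnegative algebraic integer. The real Weil polynomial is $h(t)=\prod_j(t-t_j)$, or equivalently $\prod(y-y_j)$ with $y=t+16$.

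\textbf{Step 1: enumerate candidates.} For small $\delta$ there are very few possibilities. Each $y_j$ is a totally positive (or zero) algebraic integer. The sum of its conjugates is at least its degree times the Siegel/Smyth trace bound, except for degree $\le2$ with small traces. We list the factorizations of $\prod(y-y_j)$ into irreducible factors of total trace $\le4$ and degree $5$:
\begin{itemize}
\item factors $y$ (trace $0$), $y-1,y-2,y-3,y-4$;
\item $y^2-3y+1$ (trace $3$, roots $(3\pm\sqrt5)/2$);
\item $y^2-4y+2$, $y^2-4y+1$, $y^2-4y+3$ (reducible), $y^2-4y+4$;
\item the cubic $y^3-5y^2+6y-1$, which has trace $5>4$ and is excluded.
\end{itemize}
One must also use that $y_j\le 32$, and that the cosines come from roots of unity where Honda--Tate demands. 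This leaves a finite list of isogeny classes, probably several dozen.

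\textbf{Step 2: apply Howe--Lauter.} Theorem~3.1 of \cite{HL} says the following. If $h=h_1h_2$ with $h_1,h_2$ coprime and the reduced resultant of $h_1,h_2$ equals $1$, then there is no Jacobian in the class, since the polarized variety would split as a product. This should kill every class in which a factor $y$ or $y-r$ appears alongside a factor differing by a unit resultant. For example, a factor $(y)$ together with $(y-1)$ gives resultant $1$, which kills it.

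For classes with repeated roots, such as $y^5$ (which forces $\delta=0$, not in range) or $y^4(y-\delta)$, we use instead the argument that the reduced resultant of $y$ and $y-\delta$ divides $\delta$. Then \cite{HL} forces a degree-$\delta'$ map to an elliptic curve. Since $\delta\le4$, a genus-five curve with a double cover of an elliptic curve with $65+16=81$ points gives $N\le 2\cdot81=162$, which is not a contradiction. So here the bound must come from more refined information: a map $C\to E$ of degree $d\mid\delta$ whose trace contributions force $N\le$ something below $145-\delta$. This is the step I expect to be the main obstacle. It requires carefully using that the complementary abelian variety is supersingular/maximal, and the gluing constraints along the $d$-torsion.

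\textbf{Step 3: finish residual cases.} I would check any survivors via the $y_j\le$ Ihara-type bound, $N_{64^2}\ge0$, which gives $\sum t_j^2$ constraints. Explicit checks over $\F_{64^r}$ for $r=2,3$ should eliminate what remains, completing the proof for $141$–$144$.
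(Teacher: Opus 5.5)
There is a genuine gap. It sits exactly at the obstacle you flag in Step 2, and your Step 3 would not close it.

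\textbf{What your tools do handle.} Write $f$ for $\prod(y-y_j)$ with the factors $y$ removed. Step 1 closes quickly. The $r$ nonzero $y_j$ are roots of a monic integer polynomial with positive integer product, so AM--GM gives $r\le\delta$. That leaves $15$ candidates, not several dozen.
\begin{itemize}
\item The criterion in your Step 2 is the basic reduced-resultant-one criterion. It is not the statement the paper invokes as \cite[Theorem~3.1]{HL}. It disposes only of candidates with $f(0)=\pm1$. These are $(y-1)^k$ for $1\le k\le4$, together with $y^2-3y+1$, $y^2-4y+1$ and $(y-1)(y^2-3y+1)$.
\item The six candidates involving $y-2$, $y-4$ or $y^2-4y+2$ are not isogeny classes at all. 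Their Weil factors are $t^2+14t+64$, $t^2+12t+64$ and $t^4+28t^3+322t^2+1792t+4096$. These have normalized Newton slopes $\{1/6,5/6\}$, $\{1/3,2/3\}$ and $\{1/12,11/12\}$. The multiplicities with which they occur are not divisible by the denominators.
\item You mention Honda--Tate only in passing. This slope-denominator form is what must be used. Point counts over extensions do not detect it: for $(x+16)^4(x+14)$ one gets $N_2=3517$, which is perfectly consistent.
\end{itemize}

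\textbf{The gap.} Two candidates remain: $h=(x+16)^4(x+13)$, with $142$ points, and $h=(x+16)^3(x+15)(x+13)$, with $141$ points.
\begin{itemize}
\item These are honest isogeny classes, $E_0^4\times E$ and $E_0^3\times E'\times E$, with $E_0$ maximal and $E,E'$ ordinary.
\item Every splitting into coprime factors has reduced resultant $3$ in the first case, and $2$, $3$ or $6$ in the second.
\item As you observe, the induced low-degree map to an elliptic curve only gives bounds like $3\cdot 78$.
\item Step 3 cannot help. For $g=5$, $q=64$, Ihara's bound is weaker than the Weil bound. The extension counts are also consistent; for the first class, $N_1=142$ and $N_2=3544$.
\end{itemize}

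\textbf{The missing ingredient} is the refined Howe--Lauter theorem for square $q$ that the paper applies. It says: if $h=(x-2\sqrt q)^n h_0$ with $n>0$, $h_0$ the real Weil polynomial of an ordinary class, and $h_0(2\sqrt q)$ squarefree, then $h$ is not the real Weil polynomial of a Jacobian. Take $\sqrt{64}=-8$. Then $h_0(0)$ is odd, so $h_0$ is ordinary. Also $h_0(-16)=f(0)$ equals $-3$ and $3$ respectively, which is squarefree. This excludes both classes.

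Your remark about using the supersingular factor and gluing along torsion points toward the mechanism behind that theorem: Frobenius acts on $E_0$ as the integer $-8$. As written, though, the argument stops at the obstacle you identify.
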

\begin{proof}
Suppose that $\#C(\F_{64})=145-d$, where $1\le d\le4$.
Write its real Weil polynomial as $h(x)=\prod_{i=1}^5(x-x_i)$, so that
the Weil polynomial is $t^5h(t+64/t)$. Then $-16\le x_i\le16$ and
$\sum_i(x_i+16)=d$. On removing the zero roots from $h(y-16)$, we
obtain a monic polynomial $f(y)\in\mathbb Z[y]$ with $r$ positive
roots of sum $d$. Their product is a positive integer, so the
arithmetic--geometric mean inequality gives $r\le d$. If $r=d$,
all roots are one.

For $r=1$, we have $f=y-d$. For $r=2$, we have
$f=y^2-dy+c$ with $1\le c\le\lfloor d^2/4\rfloor$.
The only remaining case is $r=3$, $d=4$. Write
$f=y^3-4y^2+by-c$. The elementary symmetric inequalities give
$1\le b\le5$ and $1\le c\le2$. Its discriminant is
\[
 16b^2-4b^3-256c-27c^2+72bc.
\]
Among these ten pairs it is nonnegative only for $(b,c)=(4,1),(5,2)$.
Thus the complete list is as follows; the last column indicates the
exclusion used below.
\[
\begin{array}{c|l|c}
 d&f(y)&\text{exclusion}\\\hline
 1&y-1&\mathrm{I}\\
 2&y-2&\mathrm{II}\\
  &(y-1)^2&\mathrm{I}\\
 3&y-3&\mathrm{I}\\
  &y^2-3y+1&\mathrm{I}\\
  &(y-1)(y-2)&\mathrm{II}\\
  &(y-1)^3&\mathrm{I}\\
 4&y-4&\mathrm{II}\\
  &y^2-4y+1&\mathrm{I}\\
  &y^2-4y+2&\mathrm{III}\\
  &(y-1)(y-3)&\mathrm{I}\\
  &(y-2)^2&\mathrm{II}\\
  &(y-1)(y^2-3y+1)&\mathrm{I}\\
  &(y-1)^2(y-2)&\mathrm{II}\\
  &(y-1)^4&\mathrm{I}
\end{array}
\]
In every case $h(x)=(x+16)^{5-r}h_0(x)$, where $h_0(x)=f(x+16)$.

\emph{I.} Here $h_0(0)$ is odd, so the corresponding abelian variety
is ordinary, and $h_0(-16)=f(0)\in\{\pm1,\pm3\}$ is squarefree.
Since $5-r>0$, \cite[Theorem~3.1]{HL}, with the square root of $64$
chosen to be $-8$, excludes a Jacobian with real Weil polynomial $h$.

\emph{II.} These candidates contain $x+14$ with multiplicity one or
two, or $x+12$ with multiplicity one. The associated Weil factors
are $t^2+14t+64$ and $t^2+12t+64$, whose normalized Newton slopes
are respectively $(1/6,5/6)$ and $(1/3,2/3)$. In the Newton polygon
of an abelian variety, the multiplicity of a slope has to be
divisible by its denominator. None of the indicated multiplicities
satisfies this condition; the remaining factors have different slopes.

\emph{III.} Here $h_0=x^2+28x+194$, giving the Weil factor
\[
 t^4+28t^3+322t^2+1792t+4096.
\]
Its normalized Newton slopes are $1/12,1/12,11/12,11/12$.
These also violate the denominator condition. The factor $(x+16)^3$
only adds slope $1/2$ and cannot remove the obstruction.
\end{proof}

\begin{proposition}\label{prop:lower}
There is a curve of genus five over $\F_{64}$ with $140$ rational points.
\end{proposition}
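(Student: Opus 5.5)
Since the statement is an existence claim, I would prove it by exhibiting an explicit curve over $\F_{64}$ and certifying its genus and its number of rational points. The construction I would try first is a $(\mathbb Z/2)^2$-cover of $\PP^1$. Such a cover is a fiber product of two Artin--Schreier double covers $y_1^2+y_1=f_1(x)$ and $y_2^2+y_2=f_2(x)$. Its three intermediate double covers are the curves $C_i$ given by $y^2+y=f_i(x)$ with $f_3=f_1+f_2$. The Kani--Rosen decomposition gives
\[
 J_C\sim J_{C_1}\times J_{C_2}\times J_{C_3},\qquad g(C)=g_1+g_2+g_3 .
\]
Comparing traces of Frobenius then gives
\[
 \#C(\F_{64})=\#C_1(\F_{64})+\#C_2(\F_{64})+\#C_3(\F_{64})-2\cdot 65 .
\]
This identity holds fiberwise, including over the poles of the $f_i$, because the fiber of $C$ over a base point is the fiber product of the fibers of the $C_i$. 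So the task reduces to choosing $f_1,f_2$ with $g_1+g_2+g_3=5$ and point counts summing to $270$.

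Concretely, I would aim for genera $(1,2,2)$. A maximal elliptic curve over $\F_{64}$ has $81$ points, so I would need two genus-two Artin--Schreier curves with $N_2+N_3=189$. For example, one could take a maximal genus-two curve with $97$ points together with one having $92$ points. The genus of each $C_i$ is read off from the pole orders of $f_i$ after reducing away even-order pole terms. Thus I would take $f_1$ with a single simple pole at $\infty$ (genus one). I would take $f_2$ and $f_1+f_2$ each with pole order five at $\infty$, or with poles of orders $1$ and $3$ at two points. I would then search over the coefficients in $\F_{64}$.

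Each count $\#C_i(\F_{64})$ equals $65+\sum_{x}(-1)^{\mathrm{Tr}(f_i(x))}$, with the standard correction at the poles. These are additive character sums, and they can be evaluated directly on the $64$ field elements. Alternatively, they can be evaluated via the known classification of maximal and near-maximal Artin--Schreier curves of genus $\le2$ over $\F_{64}$. After the choice is fixed, I would check that the fiber product is geometrically irreducible; this holds because $f_1,f_2$ are independent modulo $\wp(\bar\F_2(x))$. I would then verify the total count $81+N_2+N_3-130=140$ and record the explicit equations.

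The main obstacle is compatibility. Choosing $f_1$ and $f_2$ also fixes $f_3=f_1+f_2$, so one cannot prescribe all three quotients independently. The required near-maximal sum $270$ might not be reachable within this family. If a small parameter search in the $(1,2,2)$ configuration fails, I would try the split $(1,1,3)$, whose target is $81+81+N_3=270$, so $N_3=108$. Failing that, I would fall back to the general canonical-model approach $V(q_1,q_2,q_3)\subset\PP^4$: search over nets of quadrics with $\F_{64}$-coefficients and count points by machine. Finally I would confirm smoothness by the Jacobian criterion, which guarantees genus five for a smooth complete intersection of three quadrics.
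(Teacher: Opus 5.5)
Your proposal has a genuine gap. The statement is an existence claim, so a proof must exhibit a specific curve and certify its genus and point count; you only describe a search. Moreover, both numerical targets you name are unattainable.

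First, a slip: $y^2+y=f$ with a single simple pole is rational. Genus one requires either one triple pole (supersingular) or two simple poles (ordinary).

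In the $(1,2,2)$ split, $N_1=81$ makes $C_1$ supersingular, so $f_1$ has a single triple pole $P$, which is rational. Equality of the genera of $y^2+y=f_2$ and $y^2+y=f_1+f_2$ forces $f_2$ to have a pole of order at least three at $P$, with leading term not cancelling that of $f_1$. Hence $P$ is totally ramified in $C\to\PP^1$, and no other point is, since $C_1$ is unramified away from $P$. Over every other rational point, Frobenius acts on the fiber by a translation of $(\mathbb Z/2)^2$ or of a quotient of order two, so each such fiber has an even number of rational points. Thus $\#C(\F_{64})$ is odd and cannot be $140$; in particular the pair $97,92$ never occurs.

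In the $(1,1,3)$ fallback with $N_1=N_2=81$, the triple poles of $f_1,f_2$ must be distinct (otherwise $g_3\le1$), so $C_3$ has $2$-rank one. Its non-ordinary part is a supersingular surface with Frobenius eigenvalues $8\zeta$. The condition $N_3=108$ then forces $h_{C_3}=(x+16)^2(x+11)$, hence $h_C=(x+16)^4(x+11)$. This is excluded by the Howe--Lauter criterion used in Appendix~\ref{app:numerical}: $x+11$ is ordinary, and its value $-5$ at $-16$ is squarefree.

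Your last fallback, a machine search over nets of quadrics over $\F_{64}$, is also not an argument. There is no reason to expect a blind search to hit such a near-extremal curve. What is missing is a witness.

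The paper uses Howe's curve: the Artin--Schreier cover $w^2+w=(sx+s^2y)/z$ of the smooth plane quartic $D\colon q^2+xyz(x+y+z)=0$, which has genus three.
\begin{enumerate}[label=\textup{(\alph*)},leftmargin=2.2em]
\item A local expansion at the two points over $z=0$ shows the cover is unramified.
\item Nonhyperellipticity of $D$ shows it is geometrically nontrivial, and Riemann--Hurwitz gives genus five.
\item An explicit enumeration gives $70$ affine points of $D$ with $\operatorname{Tr}_{64/2}(sx+s^2y)=0$. Both points at infinity are inert, so $\#C(\F_{64})=140$.
\end{enumerate}

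If you want to stay with $(\mathbb Z/2)^2$-covers of $\PP^1$ of type $(1,2,2)$, a similar analysis applies. It uses parity, $2$-rank, and the exclusion of $(x+16)(x+15)$ in genus two by the same gluing criterion. It leaves only the all-ordinary case $h_C=(x+15)^5$, that is, $N_1=80$ and $N_2=N_3=95$. Whether a compatible triple of this kind exists must be settled by an actual computation, whose output you would then have to present and verify.
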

\begin{proof}
We use Howe's construction in~\cite{many}. Let $s\in\F_8$ satisfy
$s^3+s+1=0$, and put
\[
 q=sx^2+sy^2+s^3z^2+s^3xy+s^3yz+s^3xz.
\]
Let $D$ be the plane quartic $q^2+xyz(x+y+z)=0$, and let $C$ be the
smooth projective curve defined by the Artin--Schreier extension
\begin{equation}\label{eq:Howe}
 w^2+w=\frac{sx+s^2y}{z}
\end{equation}
of $\F_{64}(D)$. The partial derivatives of the quartic are
$yz(y+z)$, $xz(x+z)$ and $xy(x+y)$. Their common projective zeros
are the seven points of $\PP^2(\F_2)$, at which the quartic takes
nonzero values in $\{s^2,s^6,1\}$. Thus $D$ is smooth of genus three
and is nonhyperelliptic.

The only possible ramification of~\eqref{eq:Howe} is over $z=0$.
On putting $u=x/y$, $v=z/y$, the two points there are $(r,0)$ with
$r^2+s^2r+1=0$. They are defined over $\F_{64}$ and conjugate over
$\F_8$. The function $t=q(u,1,v)$ is a local parameter, and
$t^2=uv(u+1+v)$. Expansion at either point gives
\[
 \frac{su+s^2}{v}
 =\frac{(r+s)^2}{t^2}+\frac{r+s}{t}+s(r+1)+O(t).
\]
Indeed, one may substitute
$u=r+s^{-3}t+(s^6+r^{-1})t^2+O(t^3)$ in the two equations.
Replacing $w$ by $w+(r+s)/t$ removes both pole terms, proving that
the cover is unramified.

The extension is geometrically nontrivial. Otherwise its right-hand
side would equal $a^2+a$ over $\overline{\F}_{64}(D)$, where $a$ has
only simple poles at these two points. This would give a function
of degree at most two on the nonhyperelliptic curve $D$.
Riemann--Hurwitz for the resulting geometrically connected unramified
double cover gives $2g(C)-2=2(2\cdot3-2)=8$.

For clarity, the finite count needed for this example is
\[
\begin{array}{c|cc}
\operatorname{Tr}_{64/2}(sx+s^2y)&0&1\\\hline
\#\{(x,y)\in\F_{64}^2:q(x,y,1)^2+xy(x+y+1)=0\}&70&38.
\end{array}
\]
It is obtained by enumerating the $64^2$ pairs in
$\F_2[\alpha]/(\alpha^6+\alpha+1)$, taking
$s=\alpha^3+\alpha^2+\alpha$, and evaluating
$\operatorname{Tr}_{64/2}(a)=a+a^2+a^4+a^8+a^{16}+a^{32}$.
At either point at infinity the reduced residue is $s(r+1)$, with
\[
 \operatorname{Tr}_{64/8}(s(r+1))=s^3,
 \qquad \operatorname{Tr}_{8/2}(s^3)=1.
\]
Hence these fibers give no rational points, and $\#C(\F_{64})=2\cdot70=140$.
\end{proof}

\section*{Acknowledgments}
S. Tafazolian was partially supported by CNPq grant no.~302774/2025-4,
FAEPEX grant no.~3485/25, and FAPESP grant no.~2024/00923-6.

\section*{Declaration of AI assistance}
AI-based tools were used solely for English-language editing.

\end{document}